\documentclass[11pt,a4paper]{article}
\usepackage{amsmath,amssymb,amsthm}
\usepackage[margin=30mm]{geometry}
\usepackage[hidelinks]{hyperref}
\usepackage{tikz}
\newtheorem{theorem}{Theorem}[section]
\newtheorem{lemma}[theorem]{Lemma}
\newtheorem{corollary}[theorem]{Corollary}
\newtheorem{proposition}[theorem]{Proposition}
\theoremstyle{definition}

\theoremstyle{remark}
\newtheorem{remark}[theorem]{Remark}
\newtheorem{question}[theorem]{Question}
\newcommand{\Z}{\mathbb Z}
\newcommand{\N}{\mathbb N}
\newcommand{\CP}{\mathrm{CP}}
\newcommand{\id}{\mathrm{id}}
\newcommand{\Homeo}{\mathrm{Homeo}}
\newcommand{\Aut}{\mathrm{Aut}}
\newcommand{\RTM}{\mathrm{RTM}}

\title{Conjugacy problems in higher-dimensional Thompson groups}
\author{Ville Salo\\\texttt{vosalo@utu.fi}}
\date{}
\begin{document}
\maketitle
\begin{abstract}
We prove that topological conjugacy of elements of the Brin--Thompson group
$nV$ is undecidable for every $n\geq2$. From a Turing machine $T$, the reduction produces two elements of $nV$, which are conjugate by an involution in $nV$ if $T$ halts on the empty tape. If $T$ does not halt on the empty tape, the topological dynamics of the two group elements are distinguished by their visits to minimal subsystems of the set of (germ-)aperiodic points. In these groups, and any finitely-generated supergroups inside the homeomorphism group of Cantor space, we also obtain undecidability of the Whitehead problem (the problem of being in the same automorphism orbit), and undecidability of group-theoretic conjugacy.
\end{abstract}

\section{Introduction}

The conjugacy problem asks whether two elements of a group are conjugate by another element of that group. For a group acting on a Cantor space, one may also ask whether its elements are conjugate by an arbitrary homeomorphism of the space. An obstruction to conjugacy within the group need not obstruct topological conjugacy.

The conjugacy problem for the Brin--Thompson group $2V$ and for groups of reversible one-head machines was proved undecidable in \cite{Salo}. Here we strengthen this conclusion to topological conjugacy. The nonconjugacy survives taking a product with an identity map on any nonempty compact metrizable space, thus we obtain for example undecidability of conjugacy and topological conjugacy also in $nV$, by using the standard embedding $2V\leq nV$.

The construction is based on the reversible simulation of Kari and Ollinger \cite{KaOl08},  and is in fact almost equal to the construction used in \cite{Salo} to prove undecidability of conjugacy, although instead of conveyor belts we use simply simulated tapes.

\subsection{Conjugacy relations and the main result}

Let $G\leq\Homeo(Z)$, where $Z$ is a Cantor space. Define
\begin{align*}
\CP_1(G)&=\{(f,g)\in G^2:\exists h\in G,\ hfh^{-1}=g\},\\
\CP_2(G)&=\{(f,g)\in G^2:\exists\alpha\in\Aut(G),\ \alpha(f)=g\},\\
\CP_3(G)&=\{(f,g)\in G^2:\exists h\in\Homeo(Z),\ hfh^{-1}=g\}.
\end{align*}
These are the conjugacy, automorphism-conjugacy (a.k.a.\ the Whitehead problem) and topological conjugacy, respectively. We also define a more technical variant $\CP_4(G)$: $(f,g)\in\CP_4(G)$ if there exist a nonempty compact metrizable space $Y$ and a homeomorphism $h$ of $Z \times Y$ such that
\[
 h \circ (f\times\id_Y) \circ h^{-1}=g\times\id_Y.
\]
We emphasize that $(f, g) \not\in \CP_4$ means topological non-conjugacy of $f \times \id_Y$ and $g \times \id_Y$ for \emph{every} choice of $Y$.

We have $\CP_1(G) \subset \CP_2(G)$ and $\CP_3(G) \subset \CP_4(G)$ for any $G$, and as explained below, Rubin's reconstruction theorem implies $\CP_2(G) \subset \CP_3(G)$ for a large class of groups $G$ (in particular for the groups $nV$, $n \geq 1$).

Let $X\subseteq\Sigma^{\Z}$ be an infinite mixing shift of finite type. The group $G_X$ consists of homeomorphisms given by finite local rewriting around the origin, allowing changes in length on either side of the origin. Its subgroup $G_X^u$ consists of the length-preserving local rewrites; precise definitions appear in Section~\ref{sec:prelim}. For the binary full shift, $G_X$ is $2V$ and $G_X^u$ is the group of one-state reversible one-head machines from \cite{BaKaSa16,BaKaSa23}.

\begin{theorem}\label{thm:MainIntro}\label{thm:Main}
Fix an infinite mixing SFT $X$. Given a Turing machine $T$, one can effectively construct $f,g\in G_X^u$ such that:
\begin{enumerate}
\item if $T$ halts on empty input, an involution in $G_X^u$ conjugates
$f$ to $g$;
\item if $T$ does not halt on empty input, then for every nonempty
compact metrizable space $Y$, the homeomorphisms $f\times\id_Y$ and
$g\times\id_Y$ are not topologically conjugate.
\end{enumerate}
\end{theorem}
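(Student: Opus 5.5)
We will follow the Kari--Ollinger reversible simulation, adapted as in \cite{Salo} but with plain simulated tapes in place of conveyor belts. From $T$ we first build, effectively, a reversible Turing machine $R$ that simulates $T$ forward from a marked start configuration and, if $T$ halts, uncomputes back to the start configuration. We choose $R$ so that on \emph{every} configuration (including non-well-formed ones) it acts as a bijection. Our plan then encodes $R$ as an element of $G_X^u$ acting around the origin. Mixing of $X$ supplies words that mark a head position, a state and tape cells, and the SFT structure lets us mark ``unparseable'' regions where the machine acts trivially.

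The two elements will be $f = R'$ and $g = \iota R' \iota$, where $\iota \in G_X^u$ is an involution. For instance, $\iota$ can swap two marker symbols/states $s_0 \leftrightarrow s_1$ that tag a computation branch, with $f$ and $g$ differing only in how that tag interacts with the start configuration. Part (1) then needs no extra work if $g$ is defined as a conjugate. We must therefore reverse the design: define $f$ and $g$ by local rules that are transparently conjugate by $\iota$ when $T$ halts and genuinely different otherwise. Concretely, let $f$ run $R$ on the simulated tape, and at the empty-tape start configuration let it flip a tag $\tau \in \{0,1\}$. If $T$ halts, the orbit of the empty configuration under $f$ is a finite cycle that contains the tag flip. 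Let $g$ be the same map but with the flip placed at the halting configuration instead. When $T$ halts, $\iota$ can be chosen as ``run $R$ to the halting point and back'', i.e.\ the involution swapping the two places where the flip occurs. Defining $\iota$ locally requires bounding the computation length, and this bound is finite precisely because $T$ halts. Hence $\iota \in G_X^u$.

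For (2) we need a topological invariant that survives the product with $\id_Y$. Suppose $T$ does not halt. Consider the set $A$ of germ-aperiodic points, i.e.\ points whose $f$-orbit does not eventually have the same germ as under some power; this set is preserved by any conjugacy of $f\times\id_Y$ and $g\times\id_Y$, after taking product with $Y$. Inside $\overline{A}$ we look at minimal subsystems, in particular the minimal set coming from the infinite run of $T$ from the empty tape and its limiting head positions. We then distinguish $f$ and $g$ by whether orbits in $A$ that accumulate on this minimal subsystem also pass through the flip. Under $f$ some aperiodic orbit visits a point where the tag is toggled (the start configuration lies on the infinite forward computation), while under $g$ the flip location is never reached, since nonhalting means there is no halting configuration. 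In $Z\times Y$ the factor $\id_Y$ only multiplies fibres. Each relevant invariant is stated in terms of closures of orbits and minimal sets, which behave under products as sets times $Y$, so it transfers.

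The main obstacle is the last step: finding a genuinely topological invariant that distinguishes where the tag flip occurs relative to the minimal subsystems of the aperiodic part. It must not depend on the symbolic coordinates, and it must be robust under $\times\,\id_Y$. We would first try to characterize the relevant minimal set intrinsically, as the unique minimal set in $\overline{A}$ with a prescribed return-time/periodicity structure. We would then show that the number of points in an orbit segment between visits to neighbourhoods of it, modulo the tag, differs between $f$ and $g$.
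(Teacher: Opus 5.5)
\paragraph{Halting case.} There is a genuine gap here. ``Start configuration'' and ``halting configuration'' can only be clopen patterns read in a finite window, so they also occur on malformed tapes whose finite cycles never lead from a start to a halt. An example is a start-looking window whose simulation meets an inconsistent delimiter and reflects. On such cycles your $\iota$ (``run $R$ to the halting point and back'') is undefined, and the two flip cocycles need not even have the same parity. If the patterns are read with the forward arrow, such a cycle receives one flip from $f$ and none from $g$, so there $f$ has twice the period of $g$. You give no conjugator that handles these orbits.

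Also, a uniform bound on orbit lengths is not a property of an arbitrary complete reversible $R$ simulating $T$: on non-well-formed tapes it may compute forever even if $T$ halts on the empty tape. The bound is exactly what the Kari--Ollinger recursive calls provide, namely $H^P=\id$.

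The paper avoids the matching problem altogether. $f$ and $g$ act at the \emph{same} event: the head entering the leftmost cell of a segment whose left neighbour is a bit word $u_c$. There $f$ flips the tape bit $c$, while $g$ flips both $c$ and a head bit $b$. Once $t^P=\id$, the bit $c$ is locally readable on the invariant clopen set $B=\bigcup_{j<P}t^{-j}E$. The involution $k$ replacing $b$ by $b+c$ on $B$ then satisfies $kfk=g$ on every orbit, malformed or not.

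\paragraph{Nonhalting case.} Your claim that $g$'s flip location is never reached is false for the system as a whole. The halting pattern is a nonempty clopen set visited whether or not $T$ halts, even by germ-aperiodic orbits: a halting-state configuration with an infinite backward history lies on a germ-aperiodic orbit that reflects there. The claim holds only for the canonical orbit, which a conjugacy cannot single out.

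Beyond that, you leave the invariant itself open. It cannot be an intrinsic property of one minimal set, because the relevant minimal sets come in copies that differ only by a symbolic bit and are therefore conjugate to each other. The invariant has to be relational. The missing ingredients are:
\begin{enumerate}
\item the germ-aperiodic sets of $f$ and $g$ have exactly four minimal subsystems $X_{a,b}$, namely the two sets $K_a$ of Lemma~\ref{lem:KO-two-arrows} doubled by the head bit;
\item the graph of transitions $L_1\leadsto L_2$ between them, together with the property $\mathsf P_3$ (an injective directed path on three nodes), which is a conjugacy invariant and survives $\times\id_Y$;
\item the one-visit orbit of Lemma~\ref{lem:one-visit}, which touches the boundary cell exactly once while accumulating on $K_0$ on both sides. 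Under $g$ it links $X_{0,0}$ to $X_{0,1}$, and a $\#$ inserted far to the right forces a reflection that continues to $X_{1,1}$.
\end{enumerate}
Under $f$ the head bit never changes, so the components of the graph have at most two nodes and $\mathsf P_3(f)$ fails.
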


\begin{corollary}\label{cor:intro}
For each $n\geq2$, the relations $\CP_i(nV)$, $1\leq i\leq4$, are undecidable in the standard action. The relations $\CP_1(nV)$ and $\CP_2(nV)$ are $\Sigma^0_1$-complete. The same four undecidability conclusions hold for $\RTM(k,s)$, $k\geq2$, $s\geq1$, acting in the moving-tape model, where $k$ is the tape alphabet size and $s$ is the number of states.
\end{corollary}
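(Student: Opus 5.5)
The plan is to derive the corollary from Theorem~\ref{thm:Main}, applied to the binary full shift $X=\{0,1\}^{\Z}$, together with a transfer of the result along embeddings.

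\textbf{Undecidability for $2V$.} For $X$ the binary full shift we have $G_X^u\leq G_X=2V$. Given a Turing machine $T$, Theorem~\ref{thm:Main} effectively produces $f,g$.
\begin{itemize}
\item If $T$ halts, $f$ and $g$ are conjugate by an involution in $G_X^u\subseteq 2V$, so $(f,g)\in\CP_1(2V)$. Since $\CP_1\subset\CP_2\subset\CP_3\subset\CP_4$ (the middle inclusion by Rubin's theorem, as stated for $nV$), the pair lies in all four relations.
\item If $T$ does not halt, part 2 with $Y$ a point gives $(f,g)\notin\CP_3$. Part 2 with arbitrary $Y$ gives $(f,g)\notin\CP_4$, hence the pair lies in none of the four relations.
\end{itemize}
So the halting problem reduces many-one to each $\CP_i(2V)$. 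This uses that $\CP_1\subset\CP_2\subset\CP_3\subset\CP_4$ is a chain, which traps each $\CP_i$ between membership and non-membership.

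\textbf{Transfer to $nV$, $n\geq 3$.} Use the standard embedding $2V\leq nV$. Here $nV$ acts on $(\{0,1\}^{\N})^n$, and $f\in 2V$ acts as $f\times\id_Y$ with $Y=(\{0,1\}^{\N})^{n-2}$ a Cantor space. The embedding is computable on words/tables.
\begin{itemize}
\item If $T$ halts, conjugacy by the involution persists in $nV$, so the images lie in $\CP_1(nV)$ and hence in all four relations. The inclusion $\CP_2(nV)\subset\CP_3(nV)$ again uses Rubin.
\item If $T$ does not halt, $f\times\id_Y$ and $g\times\id_Y$ are not topologically conjugate on $Z\times Y$, so the pair is outside $\CP_3(nV)$.
\item For $\CP_4(nV)$, any witness $Y'$ would give a conjugacy of $f\times\id_{Y\times Y'}$ with $g\times\id_{Y\times Y'}$, contradicting part 2 applied with $Y\times Y'$.
\end{itemize}

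\textbf{$\Sigma^0_1$-completeness.} Hardness follows from the above reduction from the halting problem. For membership:
\begin{itemize}
\item $\CP_1(nV)$ is $\Sigma^0_1$: enumerate $h\in nV$ and check $hfh^{-1}=g$. This check is decidable because the word problem in $nV$ is decidable, with elements given by finite tables.
\item $\CP_2(nV)$: by Rubin's theorem every automorphism of $nV$ is conjugation by a homeomorphism normalizing $nV$. The step I expect to be the main obstacle is showing that the relation stays $\Sigma^0_1$. I would try to invoke the known result that $\Aut(nV)$ is given by conjugation within $nV$ extended by a computable family (the normaliser in $\Homeo$), e.g.\ Bleak et al.'s description. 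Assuming the normalizer elements can be effectively enumerated (finite-description homeomorphisms with $\alpha(\text{generators})\in nV$ checkable), this is $\Sigma^0_1$.
\end{itemize}

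\textbf{$\RTM(k,s)$.} In the moving-tape model, $\RTM(k,1)$ with tape alphabet $k$ is $G_X^u$ for the full shift on $k$ symbols. Applying Theorem~\ref{thm:Main} to this $X$ gives the claims for $s=1$ directly, since the conjugating involution lies in the group. For $s\geq 2$, embed $\RTM(k,1)$ into $\RTM(k,s)$ by letting machines ignore the state. The space becomes $Z\times\{1,\dots,s\}$, and elements act as $f\times\id_{\{1,\dots,s\}}$. This case is covered by $\CP_4$ non-conjugacy with $Y$ finite, and $\CP_2\subset\CP_3$ again uses Rubin for these groups.
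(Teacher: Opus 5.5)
Your undecidability and $\Sigma^0_1$-hardness arguments follow the paper's route. You apply Theorem~\ref{thm:Main} on the binary and $k$-symbol full shifts, absorb the idle coordinates or states into $Y$ (taking $Y\times Y'$ for $\CP_4$), and trap each $\CP_i$ using the chain obtained from Rubin's theorem. The $\Sigma^0_1$ upper bound for $\CP_1(nV)$ is also fine.

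\textbf{The gap: the $\Sigma^0_1$ upper bound for $\CP_2(nV)$.} You reduce this bound to an effective enumeration of the homeomorphisms normalizing $nV$, and then simply assume such an enumeration exists. Rubin's theorem only says that automorphisms are induced by such homeomorphisms. It gives no finite description of them and no procedure certifying that a candidate normalizes $nV$. Checking $h x_i h^{-1}\in nV$ on generators only gives $h\,nV\,h^{-1}\subseteq nV$; you would need the same for $h^{-1}$. The structural description you have in mind concerns the one-dimensional Higman--Thompson groups. For $nV$ one would have to invoke Elliott's transducer description \cite{Elliott} and prove that membership in that class is semidecidable, and you supply neither. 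As written, this part of the corollary is not proved.

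\textbf{The missing idea.} No structural knowledge of $\Aut(nV)$ is needed. The group $nV$ is finitely presented \cite{BrinPres,HennigMatucci}, say $\langle x_1,\dots,x_d\mid R\rangle$, and has decidable word problem. Enumerate pairs of $d$-tuples of words $(u_i)$, $(v_i)$. Check that each tuple satisfies the finitely many relators in $R$, so that the substitutions define endomorphisms $\alpha,\beta$. Then check $\alpha\beta(x_i)=x_i=\beta\alpha(x_i)$ for all $i$. Exactly the automorphisms, paired with their inverses, pass these checks, and testing $\alpha(f)=g$ with the word problem semidecides $\CP_2(nV)$. This is Lemma~\ref{lem:aut-re}. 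Together with converting the reduction's local-rule outputs into words, by enumerating words and comparing prefix tables, it yields Theorem~\ref{thm:sigma1}.
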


The undecidability also holds for supergroups in these actions whenever finite descriptions of the constructed elements can effectively be translated into inputs for the supergroup. We spell out this effectiveness condition and the local density needed for automorphism-conjugacy in Section~\ref{sec:consequences}.

We base the construction directly on \cite{KaOl08}. We note that there are other more modern techniques that could likely be used to achieve similar feats (and perhaps allow stronger control on the dynamical properties of the homeomorphisms $f,g$ we construct). See e.g.\ \cite{SMART,TorresMixing,GajardoOllingerTorres}.

\section{Preliminaries}\label{sec:prelim}

\subsection{Local rewrites and one-head machines}

A subshift $X\subseteq\Sigma^{\Z}$ is a closed shift-invariant subset of the full shift $\Sigma^\Z$ over a finite alphabet $\Sigma$. It is of finite type if membership is determined by avoiding finitely many finite words. A memory radius $r$ means that allowed words of length $r+1$ determine membership. Mixing for a subshift $X$ means that any two allowed words can be joined by an allowed word of every sufficiently large length, in the sense that (writing $u \sqsubset X \iff \exists x \in X: x_{[0, |u|-1]} = u$) we have
\[ \forall u, v \sqsubset X: \exists n_0: \forall n \geq n_0: \exists w\in\Sigma^n: uwv \sqsubset X. \]
For a word $u\in\Sigma^*$ and $i\in\Z$, write
\[
 [u]_i=\{x\in X:x_{[i,i+|u|-1]}=u\}.
\]
These cylinders form a clopen basis for $X$. An infinite mixing SFT has no isolated points and has positive topological entropy.

Write $xu.vy$ for a configuration $z$ with $u$ immediately to the left of the origin, $z_{[-|u|,-1]} = u$, and $v$ immediately to its right, $z_{[0,|v|-1]} = v$. Define $G_X$ as the group of homeomorphisms $f$ such that for some $N$, there is a finite description
\[
 xu.vy\longmapsto xF_1(uv).F_2(uv)y,\qquad |u|=|v|=N,
\]
where the output words have bounded length and the resulting map is a
homeomorphism of $X$. It is easy to check that $G_X$ is a subgroup of $\Homeo(X)$ under function composition.

The subgroup $G_X^u$ of $G_X$ consists of those maps for which the description can be chosen with $|F_1(uv)|+|F_2(uv)|=2N$ for every allowed input word. Such a map changes only boundedly many tape cells and then shifts the tape by a bounded amount depending on the local configuration.

For a finite nonempty state set $S$, we also consider stateful versions $G_{X,S}$ and $G^u_{X,S}$ of the above groups. These groups act on $X \times S$ and can also change the state by the local rule. Precisely, elements of $G_{X, S}$ are now homeomorphisms of the form
\[
 (xu.vy, s) \longmapsto (xF_1(uvs).F_2(uvs)y, F_3(uvs)),\qquad |u|=|v|=N,
\]
and $G^u_{X, S}$ is defined by the same condition as above.

A \emph{one-head machine} is an element of $G^u_{X, S}$, thought of as a dynamical system acting on $X \times S$. (More properly, this should be called a reversible one-head machine, but we do not consider non-reversible machines of any kind in this paper.) A \emph{partial one-head machine} is a homeomorphism between clopen subsets of $X\times S$ given by a length-preserving local rule as above. Write $\mathcal G_{X,S}$ for the groupoid whose objects are clopen subsets of $X\times S$ and whose arrows are local homeomorphisms between them, and $\mathcal G^u_{X,S}$ for its length-preserving subgroupoid. Groupoid composition is defined when the target of one arrow equals the source of the next; otherwise we may first restrict to the appropriate clopen subsets. The automorphism groups of the full object $X\times S$ are $G_{X,S}$ and $G^u_{X,S}$. We omit $S$ when it is a singleton. In much of the literature, elements of $G^u_{X,S}$ are simply called \emph{Turing machines}.

In the study of one-head machine dynamics, one also considers another type of dynamics, namely the \emph{moving-head dynamics}, where a one-head machine $f \in G^u_{X, S}$ instead acts on $X \times S \times \Z$, see \cite{KurkaTuringMachineDynamics} for the definition for classical Turing machines, and \cite{BaKaSa16,BaKaSa23} for general one-head machines. This model is not available for $G_{X, S}$, and we do not discuss decidability topological conjugacy of the action of $G^u_{X, S}$ in this model, but this model is helpful in proofs, as it allows us to talk about various positions on the tape visited by the head (of course, one can also track visited positions also in the case of $X \times S$, but the definition is less obvious).

For $X=\{0,1\}^{\Z}$, read the two half-tapes outward from the origin.
They identify $X$ with the square of the one-sided Cantor space. Local
rewrites become finite prefix replacements on clopen rectangles, giving
$G_X=2V$ \cite{Brin}. In $nV$, the same transformations act on the first two
coordinates and fix the remaining $n-2$. All these conversions of finite
descriptions are effective. Equality of local maps on an SFT is decidable:
refine to a common finite window and compare their local rules. In
particular the word problem of $nV$ is decidable from its prefix tables.

\subsection{Rubin's reconstruction theorem}
For $G\leq\Homeo(Z)$ and an open set $U\subseteq Z$, put
\[
G_U=\{h\in G:h|_{Z\setminus U}=\id\}.
\]
We call the action \emph{locally dense} if for every $x \in U$, the closure
of $G_Ux$ contains a neighborhood of $x$. In any group, we use the notation ${}^h f = h f h^{-1}$ for conjugation.

\begin{theorem}[Rubin \cite{Rubin}]
Let $G\leq\Homeo(Z)$ and $G'\leq\Homeo(Z')$ act locally densely on
locally compact Hausdorff spaces without isolated points. Then every
isomorphism $\alpha:G\to G'$ comes from a homeomorphism $h:Z\to Z'$ satisfying $\alpha(f) = {}^h f$ for every $f\in G$.
\end{theorem}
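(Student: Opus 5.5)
Since Rubin's theorem is quoted from \cite{Rubin} and only used as a black box, I would not reprove it in full. The plan I would follow is the standard one: recover the Boolean algebra of regular open sets of $Z$ from the abstract group $G$, transport it along $\alpha$, and then recover the points of $Z$ as suitable filters of regular open sets. For $f\in G$ write $\mathrm{supp}(f)$ for the interior of the closure of $\{x:f(x)\neq x\}$, which is a regular open set. I will need three facts.
\begin{enumerate}
\item Local density implies that the sets $\mathrm{supp}(f)$ form a basis of $Z$.
\item Local density implies that $G_U$ is nontrivial for every nonempty open $U$.
\item If $\mathrm{supp}(f)\cap\mathrm{supp}(g)=\emptyset$, then $f$ and $g$ commute. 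The converse fails in general, so commutation alone is not enough.
\end{enumerate}

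The first, and main, step is a first-order group-theoretic characterization of disjointness of supports. I would try to use a Rubin-type criterion: $\mathrm{supp}(f)\cap\mathrm{supp}(g)=\emptyset$ if and only if, for every $h\in G$, the element $f$ commutes with every element of the form $[g,k]$ with $k$ ranging over a suitable set of conjugates.

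The nontrivial direction goes by contraposition. Suppose the supports overlap, and pick a point $x$ in the overlap with $g(x)\neq x$. Take a small open $V\ni x$ with $g(V)\cap V=\emptyset$, contained in both supports. Local density gives a nontrivial $k\in G_V$. Then the commutator $[g,k]$ is supported in $V\cup g(V)$, and it acts nontrivially near $x$ in a way that one can arrange not to commute with $f$. Making this precise is the main obstacle: the witnesses must be chosen inside $G$ using only local density, and the no-isolated-points hypothesis is needed so that such small moving sets $V$ exist.

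Once disjointness $\perp$ is definable, the next step is to encode open sets by subsets of $G$. To a regular open set $U$ associate
\[
G^{\perp\perp}_U=\{f : f\perp g \text{ for all } g\perp G_U\}.
\]
Inclusion of regular open sets corresponds to inclusion of these double-orthogonals, and complements correspond to orthogonals. This yields a Boolean algebra isomorphism $\Phi:\mathrm{RO}(Z)\to\mathrm{RO}(Z')$, induced by $\alpha$. Because the construction is canonical, $\Phi$ satisfies $\Phi(f(U))=\alpha(f)(\Phi(U))$.

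The final step is to recover points. Using local compactness and Hausdorffness, identify each $x\in Z$ with the filter of regular open sets $U$ such that $\overline U$ is a compact neighbourhood of $x$. Then characterize, within $\mathrm{RO}(Z)$ together with the relation ``$\overline U\subseteq V$ with $\overline U$ compact'', exactly those filters that arise from points. The second part of that step is to show that this relation is itself definable from $G$. I would try to define it by saying that some $g\in G_V$ moves every set orthogonal to $V$ far off $\overline U$. If that fails, I would fall back on the Rubin \emph{apartness} relation, which is available since the action is locally dense.

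With the relation in hand, $\Phi$ maps point-filters to point-filters and so defines a bijection $h:Z\to Z'$. The map $h$ is a homeomorphism because it preserves the basis of regular open sets. The equivariance of $\Phi$ then gives $\alpha(f)={}^h f$ for all $f\in G$.
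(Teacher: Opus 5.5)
Citing the result is exactly what the paper does: it gives no proof of this statement and uses Rubin's theorem as a black box, only to obtain $\CP_2(G)\subseteq\CP_3(G)$. Citing it is all that is needed.

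Read as a proof, however, your outline has genuine gaps at the two points where the hypotheses do their work.

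The first gap is the disjointness criterion, which is never actually stated. The quantifier over $h$ binds nothing, and the ``suitable set of conjugates'' is unspecified. The natural reading, $[f,[g,k]]=1$ for all $k\in G$, fails already in the easy direction. The reason is that $[g,k]=g\cdot kg^{-1}k^{-1}$ has a factor supported on $k(\mathrm{supp}\,g)$, which $k$ can move onto $\mathrm{supp}\,f$. For instance, in $2V$ take disjoint clopen rectangles $A,B$, an involution $k$ exchanging them, and some $g\neq 1$ supported in $B$. Then $[g,k]$ acts on $A$ as the nontrivial element $kg^{-1}k^{-1}\in G_A$. Now $G_A$ has trivial centre: a nontrivial $z\in G_A$ moves some open $O\subseteq A$ off itself, so it commutes with no nontrivial element of $G_O$. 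Hence you can choose $f\in G_A$ not commuting with $kg^{-1}k^{-1}$. Then $f,g$ have disjoint supports, but $[f,[g,k]]\neq 1$.

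The second gap is the recovery of points. Your proposed definition of ``$\overline U\subseteq V$ with $\overline U$ compact'' is vacuous. Every element of $G_V$ is the identity off $V$, so it fixes pointwise every set orthogonal to $V$ and cannot move it anywhere. The fallback ``apartness relation'' is only named, not defined.

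This step cannot be waved through, because the data you extract up to $\Phi$, namely $\mathrm{RO}(Z)$ with its $G$-action, does not determine the space. To see this, let $S$ be the Stone space of $\mathrm{RO}(Z)$.
\begin{itemize}
\item $G$ acts faithfully on $S$.
\item $S$ is compact Hausdorff, without isolated points, and extremally disconnected, so its regular open sets are its clopen sets.
\item $U\mapsto\{p: U\in p\}$ is then a $G$-equivariant isomorphism $\mathrm{RO}(Z)\to\mathrm{RO}(S)$.
\item Yet $S$ is in general not homeomorphic to $Z$; it is not metrizable when $Z$ is a Cantor space.
\end{itemize}
So the identity map of $G$ is not induced by any homeomorphism $Z\to S$, although everything your construction of $\Phi$ sees is the same for both actions.

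What remains in Rubin's theorem is to single out, group-theoretically, the filters of regular open sets that converge to points. This is where density of $G_Ux$ near $x$ and local compactness are genuinely used, and your sketch leaves exactly this open.
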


\begin{lemma}\label{lem:Inclusions}
The action of $G_X^u$ on an infinite mixing SFT $X$ is locally dense.
\end{lemma}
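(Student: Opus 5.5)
Fix an open set $U\subseteq X$ and a point $x\in U$. The plan is to show that, given any cylinder neighbourhood $[w]_{-m}$ of $x$ with $[w]_{-m}\subseteq U$, where $w = x_{[-m,m]}$ and $m$ is large, every point $y$ in a slightly smaller cylinder can be approximated arbitrarily well by $hx$ for suitable $h\in (G_X^u)_U$. Concretely, put $u = x_{[-M,M]}$ for some $M\ge m$ much larger than the memory radius $r$. For any $y\in[u]_{-M}$ and any $K>M$, I will produce $h\in G_X^u$ that is supported inside $[u']_{-M'}\subseteq U$ and satisfies $(hx)_{[-K,K]}=y_{[-K,K]}$. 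Then $\overline{(G_X^u)_U x}\supseteq [u]_{-M}$, which is a neighbourhood of $x$.

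The natural choice of $h$ is a length-preserving local rewrite that swaps two words at the origin. Let $a=x_{[-K,K]}$ and $b=y_{[-K,K]}$. Both words begin and end with the same central pattern $u$, since $y\in[u]_{-M}$ and $x\in[u]_{-M}$ agree on $[-M,M]$. The difficulty is that only the centre is shared, while the outer boundary parts differ. So swapping $a$ and $b$ blindly is not a homeomorphism of the SFT, because the contexts to the left and right of $[-K,K]$ may be incompatible with the swapped word.

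To fix this, I use mixing to pad. Choose a large $L$ and work in the window $[-K-L-r,K+L+r]$. By mixing, for each pair of allowed boundary contexts $p,q$ of length $r$ on the far left and far right, there exist words of a fixed common length $L$ connecting $p$ to the left end of $a$ or of $b$, and likewise on the right. The map $h$ reads the window, and on configurations whose window has the form $p\,s\,a\,t\,q$ it rewrites it to $p\,s'\,b\,t'\,q$, where $s,t,s',t'$ are fixed connecting words. Choosing one canonical connector per context pair makes this a bijection between two clopen sets. Since the rule fixes the $r$-cells at the window boundary, the output is always in $X$. I define $h$ as this bijection, its inverse on the image set, and the identity elsewhere. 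To make the sets disjoint I insert the marker word $u$, or alternatively note that $a\ne b$ and the sets are distinguished by their central window. Disjointness of the domain and image is needed so that $h$ is a well-defined involution. Then $h$ lies in $G_X^u$, since it preserves lengths and uses a finite window.

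There is a subtlety: $hx$ must agree with $y$ on $[-K,K]$, but $x$ itself only has the form $p\,s\,a\,t\,q$ if its padding equals the chosen connectors. To handle this, I restrict the domain of $h$ to exactly the cylinder determined by $x$ on the whole window, namely $x_{[-K-L-r,K+L+r]}$. The image is then that word with the central $a$ replaced by $b$, with the original paddings kept, and such a word must be allowed. This is where mixing enters, and it is the main obstacle: we may need to replace $y$ by a nearby $y'$. To do so, pick $y'$ agreeing with $y$ on $[-K,K]$ whose outside is glued, via mixing connectors, to $x$ outside $[-K-L,K+L]$. The rewrite then swaps the full window word of $x$ with that of $y'$, and the two words share boundary $r$-blocks, so the swap is legal in the SFT. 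Finally, both cylinders are contained in $[u]_{-M}\subseteq U$ after shifting indices appropriately, so $h$ is supported in $U$, and letting $K\to\infty$ completes the proof.
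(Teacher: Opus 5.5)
Your proof is correct and, once you drop the first ``canonical connector'' attempt (which, as you notice yourself, does not act on $x$), it is essentially the paper's argument: you swap the central window of $x$ with a same-length word that realizes the target cylinder and shares the first and last $r$ symbols (obtained by mixing), which gives a local involution in $G_X^u$ supported in $[u]_{-M}\subseteq U$. The one point worth stating explicitly is that the connector length $L$ can be fixed before you know the boundary blocks $x_{[-K-L-r,-K-L-1]}$ and $x_{[K+L+1,K+L+r]}$; this holds because, for an SFT of memory radius $r$, the mixing gap depends only on the length-$r$ boundary words, and there are finitely many of those.
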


\begin{proof}
We show local density of the action of $G_X^u$. Let $X \subset A^\Z$. Fix $x\in U$ and a cylinder $[u]_i, u \in A^*, i \in \Z$, with $x \in [u]_i \subseteq U$. We show that the $G_U$-orbit of $x$ is dense in $[u]_i$. Given a nonempty subcylinder $W\subseteq [u]_i$, mixing allows us to replace a sufficiently large finite central word of $x$ by a word of the same length that realizes $W$ and has the same first and last $r$ symbols as the original word, where $r$ is a memory radius. The two words agree on the coordinates defining $[u]_i$. Their equal end contexts make them exchangeable in any exterior context. Swapping them at this fixed interval, and fixing everything else, defines a local involution in $G_X^u$ supported in $[u]_i$. If the words coincide, the identity already suffices. Thus the orbit meets $W$.
\end{proof}

\begin{lemma}
Let $G \leq \Homeo(X \times S)$ be any group containing $G_{X, S}^u$, or let $G\leq \Homeo((\{0,1\}^\N)^n)$ contain $nV$ for $n \geq 1$. Then $G$ satisfies
\[
\CP_1(G)\subseteq\CP_2(G)\subseteq\CP_3(G)\subseteq\CP_4(G).
\]
\end{lemma}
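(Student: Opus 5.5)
The first and last inclusions are formal, and all the content sits in the middle one. For $\CP_1(G)\subseteq\CP_2(G)$, if $hfh^{-1}=g$ with $h\in G$, then the inner automorphism $\alpha=({}^h\cdot)$ satisfies $\alpha(f)=g$. For $\CP_3(G)\subseteq\CP_4(G)$, take $Y$ a one-point space and identify $Z\times Y$ with $Z$. A topological conjugacy $h$ of $f$ to $g$ then gives the conjugacy $h\times\id_Y$ of $f\times\id_Y$ to $g\times\id_Y$.

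For $\CP_2(G)\subseteq\CP_3(G)$, I will apply Rubin's theorem with $G'=G$ and $Z'=Z$. Let $\alpha\in\Aut(G)$ with $\alpha(f)=g$. If the action of $G$ on $Z$ is locally dense and $Z$ has no isolated points, Rubin gives a homeomorphism $h$ of $Z$ with $\alpha(k)={}^hk$ for all $k\in G$. In particular $g=hfh^{-1}$, so $(f,g)\in\CP_3(G)$.

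It remains to check the two hypotheses. Absence of isolated points is clear: an infinite mixing SFT has no isolated points, and so neither does $X\times S$ with $S$ finite discrete, nor $(\{0,1\}^\N)^n$. For local density, I first observe that it passes to supergroups. If $G\supseteq H$ with both acting on $Z$, then $G_U\supseteq H_U$ for every open $U$, so $\overline{G_Ux}\supseteq\overline{H_Ux}$. It therefore suffices to prove local density for $H=G^u_{X,S}$ and for $H=nV$.

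For $H=G^u_{X,S}$, I adapt the proof of Lemma~\ref{lem:Inclusions}. Fix $(x,s)\in U$ and a basic clopen $[u]_i\times\{s\}\subseteq U$. Given a subcylinder $W\times\{s\}$, the same mixing swap involution on $X$, extended to act trivially on the state, lies in $G^u_{X,S}$. It is supported in $[u]_i\times\{s\}$, since it only moves points of $[u]_i$ and we may restrict it to state $s$ by letting the local rule read the state. It also maps $(x,s)$ into $W\times\{s\}$. This gives density of the orbit in the neighborhood $[u]_i\times\{s\}$.

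For $nV$ acting on $(\{0,1\}^\N)^n$, I take a basic clopen rectangle $[w_1]\times\cdots\times[w_n]\subseteq U$ containing $x$ and a subrectangle $[w_1v_1]\times\cdots\times[w_nv_n]$. I will use the element that, inside the first rectangle, applies in each coordinate the prefix exchange swapping $w_jx_j'$ with $w_jv_j$ on suitable cylinders, and is the identity elsewhere. Here $x_j'$ is a prefix of the $j$-th coordinate of $x$, chosen of length $|v_j|$ and incomparable with $v_j$ unless the two are equal, in which case that coordinate is left fixed. This is a finite prefix-replacement map, hence in $nV$. It is supported in $U$ and sends $x$ into the subrectangle.

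The only mildly delicate point, though still routine, is making sure the swap elements are genuinely supported in $U$ and that the stateful variant stays inside $G^u_{X,S}$.
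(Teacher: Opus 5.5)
Your proof is correct and follows the paper's route: the outer inclusions are formal, and $\CP_2\subseteq\CP_3$ comes from applying Rubin's theorem to $\alpha$, using that local density passes to supergroups. You also spell out the local-density checks for the stateful group $G^u_{X,S}$ and for $nV$, which the paper's proof only asserts; the paper proves Lemma~\ref{lem:Inclusions} just for $G^u_X$ and sketches the $nV$ case later, in Proposition~\ref{prop:standard-actions}.
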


\begin{proof}
The natural actions of $G_{X, S}^u$ and $nV$ are locally dense. Local density passes to supergroups, so the action of $G \supset G^u_X$ is also locally dense. We show $\CP_2(G)\subseteq\CP_3(G)$, as the other inclusions follow directly from the definitions. Suppose $(f, g) \in \CP_2(G)$, i.e.\ there is an automorphism $\alpha \in \Aut(G)$ such that $\alpha(f) = g$. By Rubin's theorem, the isomorphism $\alpha:G\to G$ comes from a homeomorphism $h : X \to X$ satisfying $\alpha(v) = {}^h v$ for every $v\in G$, and in particular $g={}^h f$. This of course means exactly that $f$ and $g$ are topologically conjugate, i.e.\ $(f, g) \in \CP_3(G)$.
\end{proof}

\section{Overview}

We begin with an overview of the construction. The construction is based on the following ``technology stack'':
\begin{center}
\begin{tikzpicture}
\node (T) at (0,0) {$T$};
\node (C) at (1.33,0) {$C$};
\node (M0) at (2.66,0) {$M_0$};
\node (M) at (4,0) {$M$};
\node (H) at (5.33,0) {$H$};
\node (t) at (6.66,0) {$t$};
\node (f) at (8,1) {$f$};
\node (g) at (8,-1) {$g$};
\draw (T) edge[->] (C);
\draw (C) edge[->] (M0);
\draw (M0) edge[dashed, ->] (M);
\draw (M) edge[dashed, ->] (H);
\draw (H) edge[->] (t);
\draw (t) edge[dashed, ->] (f);
\draw (t) edge[dashed, ->] (g);
\end{tikzpicture}
\end{center}
By solid arrows in this diagram, we indicate ``simulation in software'' i.e.\ simulation of the previous machine on some kind of encoded configurations, while dashed arrows denote other types of modifications.

In the diagram (and the discussion below), $T$ is any Turing machine, $C$ is a reversible counter machine simulating $T$, $M_0$ is a partial one-head machine simulating $C$, $M$ is another partial one-head machine simulating $C$, modified from $M_0$ by Kari-Ollinger's reversible variant of Hooper's trick \cite{KaOl08} (i.e.\ adding recursive calls). The one-head machine $H$ is a completion of $M$ using a global time-arrow. The homeomorphism $t$ is a simulation of $H$ in an arbitrary $G^u_X$. Finally, $f, g$ are minor modifications of $t$ which have slightly different behavior in how they toggle certain bits encoded on the configurations, enough to make them dynamically distinct if the original machine does not halt.

The transition from $T$ to $C$ is due to Morita \cite{Morita}, and we do not recall it. The substack between $C$ and $H$ is as in \cite{KaOl08}, but we recall it in some detail in Section~\ref{sec:machines}, as we need to prove some basic dynamical properties for $H$ which were not explicitly stated in \cite{KaOl08}.

The construction of the final homeomorphisms $f,g \in G^u_X$ is almost the one from \cite{Salo}, although we use tape segments instead of conveyor belts. This is explained in Section~\ref{sec:encoding}.

\section{The one-head machines used in the reduction}\label{sec:machines}

We distinguish the input machine, the counter machine, the two one-head
machines, and the final homeomorphisms. Let $T$ be the input Turing machine.
Apply the standard reversible simulation used by Kari and Ollinger
\cite{KaOl08} (based on \cite{Morita}) to obtain a reversible two-counter machine $C$, with its distinguished initial configuration and mortal inverse, whose canonical computation from empty counters is finite exactly when $T$ halts on empty input. The inverse of $C$ is \emph{mortal}: it has no infinite backward computation, from any counter configuration. This additional property is supplied by the clock-counter modification in \cite[Remark after Theorem 1]{KaOl08}. The machine $T$ plays no
further role below.

Let $A$ be the tape alphabet and $S$ the state set used for the Kari--Ollinger simulation of $C$, and put $X'=A^{\mathbb Z}$. We may include a hard error symbol $\#$: it is never rewritten, and any partial computation attempting to read it is undefined. We write $M_0$ for the \emph{partial one-head machine} (a homeomorphism defined between two clopen subsets of the Cantor space $X' \times S$) that simulates $C$ on tape segments
of the schematic form
\[
       @\,0^m\,1\,0^n\,2.
\]
The head of $M_0$ carries the current state of $C$. The two blocks of zeroes
encode the two counter values, so the head sweeps between the delimiters to
test and change them. In particular, $@12$ denotes a fresh simulation with both
counters zero.

There is also an \emph{internal} direction $d\in\{+,-\}$ of computation in each state of $M_0$. When $d=+$, $M_0$ simulates $C$ as described above, while if $d=-$, $M_0$ simulates the inverse $C^{-1}$. An unexpected symbol just outside the edge of a simulated counter zone, whose size we attempt to increase, reverses the internal direction and thus retraces the current search. We say that in this case, the simulation \emph{runs out of space}. The one-sided reversal here can be compared with Figure~\ref{fig:completion}, where both endpoints are reflected. The machine does not change $d=-$ to $d=+$ in any situations (note that we are defining a partial homeomorphism!).

If a hard error symbol $\#$ is encountered outside the simulated counter zone, or an incorrect state \emph{within} a counter simulation is encountered (e.g.\ the machine is looking for the second counter, but finds instead the symbol $1$), or our one-head machine simulates a halting transition of $C$, then $M_0$ instead terminates the relevant partial computation, i.e.\ its image is undefined.

We now define the (still partial) one-head machine $M$, obtained from $M_0$ by inserting recursive simulations of $C$ from the initial configuration, whose purpose is to eliminate long delimiter searches in the simulation of $C$. Its state set is the same as that of $M_0$. When a search enters a sufficiently long zero block, $M$ writes a tagged $@_s12$, where the tag $s$ records the suspended state of the parent computation, and starts a fresh simulation of $C$. One can concretely achieve this by simply composing $M_0$ with a homeomorphism that inspects the current cell and two cells to the right, and if all of them contain $0$-symbols it inserts $@_s12$ and changes the current state to simulate the initial state of the canonical computation of the counter machine.

We note that the internal direction $d$ of time is included in the suspended state $s$; thus even if the internal direction in the state $s$ is backwards, the fresh simulation of $C$ runs forward, with its own internal direction initialized to $+$. The parent's value of $d$ remains stored in $s$ and is restored on return.

When that child search runs out of space, it retraces its steps, erases the tagged word, and resumes the parent search by executing its suspended step, so that the same call is not immediately reinserted. The insertion and erasure can be expressed by a local involution exchanging equal-length clopen patterns, and the new partial homeomorphism belongs to $\mathcal G^u_{X',S}$.


Recall that the \emph{terra cognita} \cite[Appendix A]{KaOl08} is a context-free language describing the ``expected'' configurations our machines will see. Thus it contains words $@0^m10^n2$ with an additional marker for the head position, and is closed under inserting such words in the $0$-areas of another enclosing word of the form $@0^{m'}10^{n'}2$. The area that our machine has inspected must always match terra cognita, until the machine halts.

There is a subtlely here that was not present in \cite{KaOl08}: as we define $M$ as a homeomorphism, it is looking at coordinates it has not actually visited, i.e.\ not every observed or changed symbol has ever been at the origin. It is important that if the machine is inside a consistent $@0^m10^n2$ area, it does not halt even if it in this sense ``sees'' an error symbol just outside the zone delimiters, unless it actually tries to extend the zone onto this symbol. (This convention matters when we later place a segment boundary just to the left of such a delimiter.)

In the proof of Lemma~\ref{lem:one-visit}, we will be interested in the situations where the machine, under forward $M_0$-dynamics, enters a left delimiter $@$ at position $0$ in the moving head dynamics, and immediately turns back. Pick a state $\gamma$ where this happens. We may assume that the head does not spend multiple steps at the $@$-symbol, so both the next step and the inverse of the preceding step must move the head into the first zero block, and neither may inspect a cell to the left of the delimiter. Thus there is one actual visit to the delimiter. Note that in the $M$-dynamics, both before and after the visit, the machine will (assuming the counters are large enough), immediately insert a fresh simulation $@_t12$, but the entire simulation of that orbit will stay to the right of coordinate $0$.

We reserve $M$ for the \emph{partial} machine described here, including the recursive calls. The following step makes $M$ complete. It is essentially equal to the internal time arrow construction already used above (and also already appears in the proof of Theorem 8 in \cite{KaOl08}), but since this new bit will be visible in future steps and plays a crucial role in the proof, we give the definition precisely.

Regard $M$ as a homeomorphism $M:D\longrightarrow R$ between clopen subsets of
$X' \times S$. Introduce a new bit $a\in\{0,1\}$ in the head state replacing $S$ by $S \times \{0,1\}$, and define the \emph{completed one-head machine} $H$ by
\[
\begin{aligned}
 H(y,0)&=\begin{cases}(M(y),0),&y\in D,\\(y,1),&y\notin D,\end{cases}\\
 H(y,1)&=\begin{cases}(M^{-1}(y),1),&y\in R,\\(y,0),&y\notin R.\end{cases}
\end{aligned}
\]
The image with bit $0$ is partitioned according to $R$, and the image with bit $1$ according to $D$, so $H$ is a homeomorphism in $G^u_{X',S \times\{0,1\}}$. We call $a$ the \emph{global arrow (of time)}.

\begin{figure}[ht]
\centering
\begin{tikzpicture}[>=stealth]
\node (a0) at (0,1) {$(y_0,0)$};
\node (a1) at (3,1) {$(y_1,0)$};
\node (a2) at (6,1) {$(y_2,0)$};
\node (b0) at (0,-1) {$(y_0,1)$};
\node (b1) at (3,-1) {$(y_1,1)$};
\node (b2) at (6,-1) {$(y_2,1)$};
\draw[->] (a0)--node[above] {$M$}(a1);
\draw[->] (a1)--node[above] {$M$}(a2);
\draw[->] (a2)--node[right] {$y_2\notin D$}(b2);
\draw[->] (b2)--node[below] {$M^{-1}$}(b1);
\draw[->] (b1)--node[below] {$M^{-1}$}(b0);
\draw[->] (b0)--node[left] {$y_0\notin R$}(a0);
\end{tikzpicture}
\caption{Completion of a finite partial orbit $y_0\mapsto y_1\mapsto y_2$. At an endpoint the global arrow is flipped. For an infinite end, the endpoint reflection would be absent.}
\label{fig:completion}
\end{figure}
The involution $\rho(y,a)=(y,1-a)$ satisfies $\rho H\rho=H^{-1}$.

For a self-homeomorphism $h$ of a Cantor space, a point $x$ is \emph{germ-aperiodic} if, for every $p>0$, the germ of $h^p$ at $x$ is nontrivial. We always denote the set of germ-aperiodic points by $\mathcal{A}(h)$. Equivalently, no positive power of $h$ restricts to the identity on a neighborhood of $x$. This property is preserved by arbitrary topological conjugacies. The set of $\mathcal{A}(h)$ is invariant and closed (by definition its complement is the set of points $x$ such that for some open set $U \ni x$ we have $h^n|_U = \id|_U$ for some $n \geq 1$).

\begin{remark}
For a one-head machine on a full shift, germ-aperiodicity in the moving-tape
model of \cite{KurkaTuringMachineDynamics} corresponds to aperiodicity in the moving-head model. Indeed, a moving-head period visits only finitely many cells and restores their contents, the head state, and the head position. The same finite computation
therefore gives the identity on a cylinder in the moving-tape model. Conversely, a moving-tape period may have nonzero net head displacement: on that particular spatially periodic tape, shifting the tape back to the head hides the displacement. Such a point is \emph{germ-aperiodic}, since nearby tapes with different remote tails detect every nonzero displacement. The same local argument applies to our block encodings in an infinite mixing SFT.
\end{remark}

\begin{lemma}\label{lem:KO-two-arrows}
The one-head machine $H$ can be constructed effectively from $T$ with the following properties.
\begin{enumerate}
\item If $T$ halts, $H$ has finite order in $G^u_{X',S\times\{0,1\}}$.
\item The global arrow changes at most once on every germ-aperiodic $H$-orbit.
\item If $T$ does not halt, there are two minimal subsystems $K_0,K_1\subset\mathcal{A}(H)$, with $K_a$ contained in the clopen global-arrow layer $a$. Every infinite time tail of a germ-aperiodic $H$-orbit on which the global arrow has value $a$ accumulates on $K_a$. These are the only minimal subsystems contained in $\mathcal A(H)$.
\end{enumerate}
\end{lemma}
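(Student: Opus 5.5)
We prove the three items in order, using the Kari--Ollinger analysis of $M$ \cite{KaOl08} (every partial $M$-orbit inspects only a region consistent with terra cognita, and a computation from $@12$ is finite iff $C$ halts) together with the mortality of $C^{-1}$.

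\textbf{Item 1.} Assume $T$ halts, so the canonical computation of $C$ from empty counters has finite length $L$. We first bound the length of every partial $M$-orbit. Forward, a computation either halts, runs out of space and retraces, or enters a recursive call. A recursive call is a fresh canonical computation, so it runs at most $L$ simulated steps before halting, and each simulated step costs a number of head moves bounded in terms of the counter sizes. Because long delimiter searches are cut off by recursive calls, Kari and Ollinger obtain a uniform bound on the time between consecutive $C$-steps of the parent. Backward, mortality of $C^{-1}$ bounds the backward simulation, and the backward runs of children are again canonical computations, hence bounded by $L$. An induction on recursion depth then shows that every maximal partial $M$-orbit has length at most some uniform $N$. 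This is the delicate step: the induction must bound depth and time simultaneously, and we would follow the proof of Theorem~8 in \cite{KaOl08}. Once all maximal partial orbits have length at most $N$, Figure~\ref{fig:completion} shows that every $H$-orbit is a finite cycle of length at most $2N+2$. These cycle lengths are attained on clopen pieces, so $H^{(2N+2)!}=\id$.

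\textbf{Item 2.} The global arrow flips only at endpoints of partial $M$-orbits. An $H$-orbit with two flips is the completion of a finite partial orbit, which is a finite cycle. Since the rule is local and the finite computation reads only finitely many cells, the same cycle occurs on a cylinder, so the point is not germ-aperiodic. Hence a germ-aperiodic $H$-orbit has at most one flip.

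\textbf{Item 3.} Assume $T$ does not halt. We define $K_0$ as the orbit closure, in layer $a=0$, of the configuration carrying the fresh simulation $@12$, with $\#$ or arbitrary content outside the region the infinite canonical computation eventually visits; $K_1$ is the analogous set with arrow $1$, and it equals $\rho(K_0)$ after reversing time. We must check three things.

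\emph{Minimality.} Using the recursive structure, the canonical computation's visited region is self-similar: every finite pattern of its space-time reappears inside later recursive calls, with bounded gaps. This gives uniform recurrence, and hence minimality of $K_0$.

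\emph{$K_a \subseteq \mathcal A(H)$.} The head moves unboundedly, so no power of $H$ is locally the identity near points of $K_a$.

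\emph{Attraction.} Take a germ-aperiodic orbit whose infinite forward tail has arrow $0$. By item 2 it is eventually a forward $M$-orbit that never halts. It cannot run out of space indefinitely without reversing, and by mortality its backward phases are finite, so eventually it keeps spawning fresh recursive calls that run arbitrarily long. Consequently its local patterns near the head are eventually patterns of the canonical computation, and the tail accumulates on $K_0$. The backward case uses $\rho H\rho=H^{-1}$.

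\emph{Uniqueness.} Any minimal set in $\mathcal A(H)$ contains a point with an infinite tail on which the arrow is $a$. Its orbit closure therefore meets $K_a$, and minimality forces equality.

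I expect the attraction claim to be the main obstacle, specifically excluding tails that remain forever in an ``unexpected'' non-terra-cognita context. We would handle this with Kari--Ollinger's terra cognita lemma, which says that inspected regions always match terra cognita, so any infinite computation is eventually dominated by nested canonical calls.
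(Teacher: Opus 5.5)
Your items 1 and 2 match the paper, which also defers the uniform bound in item 1 to Kari--Ollinger. The gap is in item 3: your choice of $K_0$, and the minimality argument for it, fail.

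\textbf{Why your $K_0$ is not minimal.} You take the orbit closure of the top-level start $@12$, with $\#$ (or arbitrary) content to the left of the delimiter. Its generating point $p$ is not recurrent. The local rule, and hence the metric, see a window around the head that contains the delimiter and the cells to its left. Every fresh start after time $0$ is a recursive call, so it sits inside a zero block with zeros on its left and carries a tag $@_s$. A window matching that of $p$ can therefore only reappear when the top-level simulation of $C$ is back in its initial configuration. Mortality of $C^{-1}$ excludes this at positive times: a return would make the canonical computation periodic, giving an infinite backward orbit. So your claim that every finite space-time pattern recurs with bounded gaps is false for patterns touching the outer context or the tag, and you give no proof for the remaining patterns. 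The uniqueness step then breaks as well: a minimal $L\subseteq\mathcal A(H)$ meeting $K_0$ only yields $L\subseteq K_0$.

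\textbf{How the paper avoids this.} There are two ideas you are missing.
\begin{enumerate}
\item The base point is a fresh-call configuration $x_a=\cdots000@_{s_*}12000\cdots$ with zeros on both sides and a \emph{recurrent} tag $s_*$. This means a suspended state that occurs infinitely often at fresh-call sites deep inside zero blocks; one exists by pigeonhole. The tag matters, since a non-recurrent tag again gives a non-recurrent point.
\item Minimality is derived from attraction, not from a separate self-similarity claim. One first shows that every germ-aperiodic point whose arrow is constantly $a$ eventually enters each neighbourhood $U_\epsilon$ of $x_a$. Compactness of the closed invariant set $Z_a$ of such points then gives a uniform return time. Since $K_a\subseteq Z_a$, every point of $K_a$ has $x_a$ in its orbit closure, which is minimality.
\end{enumerate}

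\textbf{A missing step in your attraction argument.} The second idea needs attraction for all points of $Z_a$, and your sketch of that step skips the key mechanism. Suppose a $d=-$ phase reaches a configuration without predecessor and the arrow does not flip. Then the head must be at some $@_s12$, and it steps out to the parent zone, which strictly contains the previous one. This strict growth of available space is what forces arbitrarily long canonical runs, and hence visits near $x_a$. Your phrase ``backward phases are finite, so eventually it keeps spawning long calls'' does not supply this.
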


\begin{proof}
The first assertion is the periodicity half of the Kari--Ollinger
construction: when the initial computation of $C$ terminates, any recursive call with sufficient space actually leads to halting of $M$, and two such terminations lead to periodicity of $H$. In on the other hand all recursive calls have bounded space for their computation, then there must be a uniform bound on all searches, and thus $H$ is periodic,

For the second assertion, the global arrow changes only at an endpoint of a simulation of a partial orbit of $M$. Two changes join the forward and backward traversals of such a partial orbit. The resulting orbit for $H$ would then be finite, and not even germ-aperiodic, since we clearly have an open neighborhood where a power of $H$ is trivial.

We now prove the third assertion. A \emph{fresh-call configuration} has tape
\[
             \cdots 000@_s12000\cdots
\]
and the head at the designated entry position in the fixed initial child
state. The symbol $s$ is the suspended state of the parent;
it includes its counter-machine state, search phase, and internal direction $d$ (note that it does not contain the global arrow $a$, which is instead always carried in the second component of the state of $H$).

We first explain why fresh calls occur with arbitrarily large clean (i.e.\ zero) neighborhoods in the simulation of the canonical computation of $C$. First, consider the corresponding simulation of $C$ under the action of $M_0$. Since $T$ does not halt, $C$ has mortal inverse, and thus at least one counter must grow arbitrarily large.

In the corresponding simulation of $M$, we follow the steps of $M_0$ until possibly a recursive call is started by inserting $@_s12$ for some $s$. But this begins another simulation of $C$, where counters eventually grow, thus the simulation eventually runs out of space. Since the canonical computation (nor canonical computations started in its further recursive calls) does not halt, the action of $M$ does not flip the global arrow $a$, and eventually the recursive call must return to its starting point, $@_s12$ is erased from the tape, and the simulation of $M_0$ takes another step.

Thus, eventually in this ``top'' simulation of $C$ following the $M_0$-orbit, counters also get arbitrarily large. Thus at some point, the head is deep between two of the counter delimiters. Now, we see a good approximation to a fresh-call configuration when the next recursive call begins.

Next, we observe that we also have control on the state $s$ stored in the $@_s$-symbol: If $s$ is any \emph{recurrent state}, namely a state that appears infinitely many times in the canonical computation, at fresh-call sites arbitrarily far from both ends of the surrounding zero interval. (Some state like this must exist, by the pigeonhole principle.) Then by applying the above argument to a fresh simulation $...000@_t12000...$, we eventually see arbitrarily good approximations of $...000@_s12000...$ during searches through zero zones (and crucially, these are now independent of $t$). Fix one recurrent state $s_*$ once and for all.

Next, we argue that the same happens in all germ-aperiodic orbits. Since the global arrow $a$ can be flipped only once, it suffices to consider a long time-segment where it does not flip. Furthermore, it suffices to assume $a=0$, since the case $a=1$ is symmetric (we are proving a claim about syndetic visits to a clopen set, and this type of property is preserved when inverting orbits).

Suppose now $x$ is a germ-aperiodic point for $H$ where the global arrow is $a=0$, and the global arrow is not flipped in the near future, so that we simulate $M$. We observe that as long as the area seen by the head of $H$ is terra cognita, we are simply following an $M_0$-simulation on an innermost $@0^m10^n2$ segment. This is because, as discussed above, we can ignore all recursive calls by waiting for them to return, assuming they all return. (If some such call does not return, then it is an infinite simulation of the canonical computation, where we have already established recurrence.)

After ignoring the recursive calls, if $d = +$ in this simulation, then we simulate the counter machine $C$ forward in time, and since the inverse of $C$ is mortal, the computation has three options: it runs forever, it eventually halts, or it runs out of space. If it runs forever, because $C^{-1}$ is mortal, the counters eventually get large, and thus eventually we see canonical computations of arbitrary length started recursively. For these we have already established recurrence. Since we are assuming the global arrow is not flipped, we may assume the computation does not halt. It remains to consider the third case, that we run out of space. In this case, the internal $d$-arrow eventually gets flipped, and we are now in the $d = -$ situation.

So consider this $d = -$ situation. In this case, again since $C$ has mortal inverse, eventually the simulation reaches a state with no predecessor. Since we assume the global arrow is not flipped, the only possibility is that we reach $@_s12$, and we step to the parent layer of (what is expected to be) terra cognita. A crucial observation now is that, if indeed at this point we are inside a $0$-zone of a valid counter machine simulation $@0^m10^n2$, then the counter value is strictly larger than the sum of counter values than in the simulation we started with (because the zero contains the initial one as a subword). Even if we are not in such a valid zone, there can only be zeroes in the area that the machine has visited so far.

If at any point, $M$ encounters it is not on terra cognita, by running into an incorrect delimiter during a search, it halts and the global arrow is flipped. Thus, as long as the global time arrow is not flipped, we only see terra cognita, and as explained above, we must be eventually simulating $C$ with arbitrarily large counters, thus we indeed see fresh-call configurations started with the recurrent state $s_*$ 

Now we take $K_a$ the $H$-orbit closure of any such fresh-call configuration $x_a$ with global arrow $a$ and recurrent state $s_*$ at the $@$-symbol. We observe that $x_a$ is germ-aperiodic. Again, this is immediate from our previous claims: The canonical computation is simulated by $M_0$, up to ignoring all recursive simulations (which all eventually run out of space and thus do not halt), and the canonical computation never halts. Since $C$ has mortal inverse, the counters become arbitrarily large, so the in the moving head model, the head must travel arbitrarily far, and later return back to the origin. This proves germ-aperiodicity of $x_a$.

Next, if the global arrow stays $a$ for a sufficiently long time, then as explained above, we must see arbitrarily good approximations to $x_a$. This follows from a simple compactness argument, which we explain for completeness.

The set of germ-aperiodic points $\mathcal{A}(H)$ is compact, write $Z_{a, n} \subset \mathcal{A}(H)$ for the clopen subset of points where the global arrow stays constant $a$ for $n$ steps backward and forward in time. The set of points where the global arrow stays fixed forever is the compact subset $Z_a = \bigcap_n Z_{a,n} \subset \mathcal{A}(H)$. For each $\epsilon > 0$ there exists $n_0$ such that each $x \in Z_a$ enters the $\epsilon$-ball $U_\epsilon$ around $x_a$ every $n_0$ steps. Thus $Z_a \subset \bigcup_{i = 0}^{n_0} H^{-i}(U_\epsilon)$. For large enough $n$, we then have $Z_{a,n} \subset \bigcup_{i = 0}^{n_0} H^{-i}(U_\epsilon)$ as well.

In particular, the orbit closure $K_a$ of $x_a$ is minimal, because $x_a$ is germ-aperiodic, and as the global array stays at value $a$ throughout the orbit of $x_a$, every point $x'$ in the orbit closure of $x_a$ has to have $x_a$ in its orbit closure.
\end{proof}

\subsection{A one-visit computation with constant global arrow}

\begin{lemma}[One-visit lemma]\label{lem:one-visit}
Suppose $T$ does not halt. Then there is a bi-infinite orbit $(y_j)_{j\in\Z}$, $y_j \in X' \times S \times \Z$ in the moving head model of $M$, such that:
\begin{enumerate}
\item the head remains in $[0,\infty)$, all symbols in $(-\infty,-1]$ are error symbols $\#$, which no transition or inverse transition in this computation attempts to read;
\item at time $0$ (i.e.\ in $y_0$) the head is at coordinate $0$; this is the only visit to coordinate $0$;
\item after adjoining $a=0$ in the state, in the corresponding $H$-orbit in the moving-tape model (the standard action of $H$ on $X' \times S \times \{0,1\}$), configurations accumulate on $K_0$ both in the forward and backward time tail.
\end{enumerate}
In particular $((y_j,0))_j$ is an $H$-orbit with constant global arrow $0$. The positive head positions are necessarily unbounded.
\end{lemma}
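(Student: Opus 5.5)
We construct the orbit directly from the canonical computation of $C$, using the delimiter-visit situation prepared before Lemma~\ref{lem:KO-two-arrows}. Since $T$ does not halt, the canonical computation of $C$ is infinite. By mortality of $C^{-1}$, its counters grow without bound. Under $M_0$-dynamics the head periodically sweeps to the left delimiter $@$ to test or modify the first counter. We fix a state $\gamma$ in which the head enters $@$ at coordinate $0$ and immediately turns back. As arranged above, this is a single visit: the preceding and following steps lie in the first zero block, and no transition inspects a cell to the left of $@$. If the canonical computation never returns to the left delimiter after some time, we instead use the first counter's zero test, which always touches $@$. We may also pass to a recursive child computation, whose $@_s$ plays the same role.

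\textbf{Forward half.} Place the tape $\cdots\#\#\#@\,0^m10^n2\,0^\infty$ with $@$ at coordinate $0$, choose a configuration of the canonical computation in state $\gamma$ at the delimiter, and run $M$ forward. We must check three things.
\begin{enumerate}
\item $M$ is defined forever. The simulated top computation never halts, recursive calls run out of space and return, and the zone $@0^m10^n2$ only grows rightward into the zeros, never onto $\#$.
\item The head never returns to coordinate $0$. Here we need a choice: pick the time so that the canonical computation never again tests the first counter at the delimiter. If that is impossible, we replace the actual top-level simulation by a child call $@_{s_*}12$ sitting at coordinate $0$. Its parent context to the left is replaced by $\#$, and the child's own left delimiter is visited only once, at the moment of entry/return.
\end{enumerate}

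This forces the cleaner construction. Take $y_0$ to be the moment a fresh call $@_{s_*}12$ is written at coordinate $0$, with the tape $\#^{-\mathbb N}$ to its left and $0^\infty$ to its right. Then:
\begin{enumerate}
\item \textbf{Forward.} The child runs the canonical computation forever. It never returns, so it never again reads $@_{s_*}$ as a return point, and its left-delimiter visits can be arranged to be absent by the delimiter convention. If they cannot be, we shift so that coordinate $0$ is the unique turning visit in state $\gamma$.
\item \textbf{Backward.} Running $M^{-1}$ from $y_0$ would un-create the call and need the parent to the left, which is $\#$. To avoid this, we instead run backward the mirror situation: by mortality of $C^{-1}$, the backward orbit from a visit at $@$ in state $\gamma$ stays inside the zone and reaches a predecessor-free state, which triggers insertion of a recursive call there.
\end{enumerate}

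The cleanest plan is therefore a compactness argument. For each $k$, take from the canonical computation (which by Lemma~\ref{lem:KO-two-arrows} recurs to the $\gamma$-at-$@$ event with arbitrarily large clean neighbourhoods) a finite segment of length $2k$ centred at a $\gamma$-visit to $@$, during which the head stays at or right of that $@$ and visits it only at the centre. This is possible because counters are large, so the head spends long stretches deep in the zero blocks and inside recursive calls, all to the right. Shift each segment so that the visit occurs at coordinate $0$, fill coordinates $<0$ with $\#$ (legitimate because those cells are never read), and take a limit point as $k\to\infty$. The limit is a bi-infinite $M$-orbit satisfying (1) and (2), since all the conditions are closed and local in time.

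For (3), observe that with $a=0$ this orbit never flips the arrow, being a genuine $M$-orbit on terra cognita. Its head positions are unbounded, because the counters of the enclosing computation grow in both time directions within the segments. Hence the orbit is germ-aperiodic, and Lemma~\ref{lem:KO-two-arrows}(3) gives accumulation on $K_0$ in both tails. Unboundedness of the positive head positions also follows directly, since a bounded one-visit bi-infinite orbit would be periodic.

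The main obstacle is the existence of the segments in the compactness step: producing $\gamma$-visits to a delimiter that is not revisited for $k$ steps in either direction. The plan is to use visits at child delimiters $@_{s}$ deep inside a long computation. Such a child runs a long canonical computation in the time window, and only its own left delimiter is the visited cell. The sweeps of the child's first counter then have to be handled by choosing the $\gamma$-visit to be the last one before the counter becomes large and the first one after; the mortality and growth arguments from Lemma~\ref{lem:KO-two-arrows} should give this.
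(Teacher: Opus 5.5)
Your closing compactness scheme has the right shape, and it is essentially how the paper concludes: isolate one delimiter visit for longer and longer time windows, fill the left half with $\#$, and pass to a limit. But the step you flag as the main obstacle is the whole content of the lemma, and you leave it unresolved. That step is a visit to a left delimiter that is not repeated for $k$ steps in \emph{either} time direction.

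The missing mechanism is the immediate recursive call next to the delimiter. Place the head in state $\gamma$ at the left delimiter of a zone $@_s0^m10^n2$ with $m$ large. By the choice of $\gamma$, both $M$ and $M^{-1}$ move the head into the first zero block. There, within a bounded number of steps, a fresh child call is running. In the backward direction its internal direction $d$ is reversed, but it still represents the canonical computation of $C$. Since that computation never halts, the child can only return, and the parent can only come back to the delimiter, after the child has exhausted the roughly $m$ cells available to it. So the return times in both directions tend to infinity with $m$, and all intermediate work lies strictly to the right.

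Isolation is therefore two-sided as soon as the adjacent zero block is long at the moment of the visit. It does not depend on the delimiter being rarely tested. Letting $m\to\infty$ gives $y_0=(\cdots\#\#\#.@_s0^\infty,t,0)$ directly, and in its orbit the child never returns in either direction. This configuration is simply written down: the lemma asks only for some orbit, so you need not extract segments from the canonical computation at all.

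Several of your intermediate claims would fail as stated.
\begin{itemize}
\item Lemma~\ref{lem:KO-two-arrows} gives recurrence of fresh-call configurations, not of $\gamma$-visits with large clean neighbourhoods. At a fresh call the child's counters are $0$, so, as you note yourself, its own sweeps soon revisit its left delimiter.
\item For the same reason, your ``cleaner construction'' with $y_0$ at a freshly written $@_{s_*}12$ already fails in the forward direction. The delimiter convention does not help, since it only concerns seeing symbols beyond the delimiter, not visiting it.
\item Drawing the visits from the canonical computation would require its first counter to be large at visit times. Only one of the two counters is known to be unbounded.
\item ``The last visit before the counter becomes large and the first one after'' picks out two different visits. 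Neither is guaranteed to be isolated on both sides.
\end{itemize}
Once the child-call mechanism is supplied, your arguments for (3) and for unboundedness of the head positions go through.
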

\begin{proof}
This is the same construction as used for the one-visit property in \cite{Salo}; we record which machine and which direction bit are involved. Consider a valid parent counter zone
\[
 @_s0^m10^n2
\]
with the head at its left delimiter, and take $m$ large. We may fix the parent tag and phase while letting $m\to\infty$. The incoming sweep has just tested the counters, while the outgoing sweep starts the next inspection or update. From this checkpoint, both forward and backward traversal enter the long first zero block. Within bounded distance they initiate a fresh canonical child call. In the backward
traversal the child's internal traversal direction is reversed, as
prescribed by the reversible search routine; the child still represents
the same canonical counter computation. (This is a statement about $M$
and $M^{-1}$, before the completion bit $a$ is adjoined.)

A child in a finite zero block returns only after exhausting its available
space, undoing its work, and restoring the saved parent state. Since the
canonical computation does not halt, the parent checkpoint cannot be reached again until such a return. As $m$ grows, the return times in both directions tend to infinity. Indeed, any fixed finite part of the canonical child computation fits in the block once $m$ is sufficiently large; reversibility gives the same assertion for the reverse traversal. Thus, we have one visit at coordinate $0$. All work in recursive calls happens properly to the right.

Passing to any limit point, and replacing the left tail with $\#$-symbols, we obtain the configuration $y_0$. Note that the machine indeed never inspects or visits these symbols. In fact, we can take simply 
\[ y_0 = (...\#\#\#. @_s0^{\infty}, t, 0). \]
for any $s$ and any suitable state $t$.
\end{proof}

\section{Encoding and the two homeomorphisms}\label{sec:encoding}
\subsection{Simulated tape segments}

Let $X$ be an infinite mixing shift of finite type. Fix a memory radius $r$ for $X$. We use equal-length coding words with two properties: they are \emph{exchangeable}, meaning that any occurrence of one may be replaced by another in the same exterior context without leaving $X$, and any concatenation of them is allowed in $X$. These properties can be obtained by taking long words whose first and last $r$ symbols agree with the same phases of a fixed periodic point of $X$. Mixing and positive entropy provide arbitrarily many such words of one sufficiently large length, chosen to be a multiple of that periodic point's period.

Choose such a family of words of length $\ell$, partitioned into
\[
\begin{aligned}
 W&=\{w_t:t\in A\},\\
 V&=\{v_{q,a,b,t}:q\in S,\ a,b\in\{0,1\},\ t\in A\},\\
 F&=\{u_0,u_1\}.
\end{aligned}
\]
Thus $|A|+4|S||A|+2$ words suffice, and the sets $W,V,F$ are disjoint. A $W$-word stores one tape symbol of $M$. A $V$-word stores that tape symbol, the state $q$ of $M$, the global arrow $a$ of $H$, and a new \emph{head bit} $b$. An $F$-word stores a \emph{tape bit} $c$.

Call a point \emph{active} if its block $[0,\ell-1]$ belongs to $V$. All maps below fix the clopen complement of the active set. At an active point, partition the tape into blocks $[j\ell,(j+1)\ell-1]$, $j\in\mathbb Z$. The maximal interval $I\subset\mathbb Z$ containing $0$ whose other blocks belong to $W$ is the simulated tape segment. The block at $0$ is its current head position. The blocks outside this maximal segment are considered to begin an infinite tail of $\#$-symbols, from the point of view of the simulated machine $H$.

We now simulate the machine $H$ on $I$. For example, when the simulated head moves one cell to the right, we replace the old head block by the appropriate $W$-word, replace the next $W$-word by the new $V$-word, and shift the actual configuration left by $\ell$. The new head block is again at $[0,\ell-1]$. Left moves are analogous. Only a bounded number of blocks is inspected or rewritten in each step. Exchangeability and legality of concatenations of the words ensure that these rewrites stay in $X$. Furthermore, since we imagine $\#$-tails at the ends of $I$, which are never modified by $H$ we indeed obtain a homeomorphism $t \in G_X^u$ which preserves $b$ and simulates $H$ on infinite tapes.

Any finite segment has finitely many tape contents, head states, and head positions relative to its endpoints. Its completed dynamics is therefore periodic.

The role of the tape bit is as follows. If the block immediately preceding the left end of $I$ is $u_c\in F$, then when the head is at the leftmost cell of $I$, we flip the bit $c$. More precisely, let $E$ be the clopen set of active configurations where $[-\ell,-1]$ equals $u_0$ or $u_1$. On $E$ define two commuting local involutions: $B_c$ exchanges the tape bit in $[-\ell,-1]$ ($u_0 \leftrightarrow u_1$), and $B_b$ flips the value of the current head bit $b$. Both are the identity outside $E$.

Their common support condition is unchanged by either involution. Set
\[
                       f=B_c\circ t,\qquad
                       g=B_b\circ B_c\circ t.
\]
Note that $B_c, B_b$ act nontrivially exactly once over the orbit described in Lemma~\ref{lem:one-visit}. All three maps $t,B_c,B_b$ lie in $G_X^u$, hence so do $f,g$.

In plain language, both maps simulate $H$ on encoded tapes, both maps flip a tape bit $c$ whenever the head visits the cell immediately to the right of it, and $g$ additionally flips the head bit $b$ in this situation. In particular $f$ preserves the head bit $b$ on every active orbit.

\subsection{The halting case: an explicit involutory conjugacy}

\begin{lemma}\label{lem:halting-conjugacy}
If $T$ halts, then $t$, $f$, and $g$ have finite order, and an involution
$k\in G_X^u$ satisfies $kfk=g$.
\end{lemma}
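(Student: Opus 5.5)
The plan is to first get finite order of $t$ from Lemma~\ref{lem:KO-two-arrows}(1), then finite order of $f$ and $g$, and then build $k$ as a head-bit involution.

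\textbf{Finite order of $t$.} If $T$ halts, $H$ has finite order, say $H^p=\id$. The map $t$ simulates $H$ on the simulated tape segment $I$ of each active point, treating the outside as $\#$-tails. The segment $I$ may be finite or infinite. In either case $t$ acts as (a conjugate of) $H$ restricted to configurations with $\#$-tails, since the $\#$ symbols are never rewritten and reading them makes partial computations undefined. Hence $t^p$ is the identity on active points, and $t$ fixes all non-active points. Note that $H$ having finite order in $G^u_{X',S\times\{0,1\}}$ includes configurations containing $\#$, because $\#$ is simply a tape symbol of $X'$.

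\textbf{Finite order of $f$ and $g$.} Both maps are $t$ composed with involutions that only toggle the bit $c$ in the $F$-block left of $I$ and the head bit $b$. Neither bit is read by $t$, so the simulated $H$-dynamics on $I$ is unchanged. Writing $f^n$ and $g^n$ on an active orbit, the $(I,\text{tape},\text{state})$ component evolves as $t^n$, while $c$ and $b$ are each toggled by the number of visits of the head to the leftmost cell of $I$ (only $c$ for $f$). After time $p$ the $H$-component returns, so after time $2p$ both toggle counts are even. Hence $f^{2p}=g^{2p}=\id$.

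\textbf{The conjugating involution.} I would take $k$ to flip the head bit $b$ according to a parity that is readable locally, since the global parity of visits is not. The natural idea is to make $b$ track $c$. On active points with an $F$-block $u_c$ at the left of $I$, let $k$ replace $b$ by $b\oplus c$. This needs the $F$-block, which lies at an unbounded distance from the head, so it is not local, and this is where finite order must be used.

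Instead I would use the orbit-sum trick. Set
\[
k'=\prod_{j=0}^{N-1} t^{j} B_b^{[\text{visit}]} t^{-j},
\]
that is, define $k$ to flip $b$ iff the number of visits to the left cell along a fixed window of the $f$-orbit is odd. Concretely, because $f$ has finite order $q=2p$, the finite sequence of positions of $f^j(x)$, $0\le j<q$, is determined by a bounded neighbourhood of $x$; the head moves by at most a bounded amount per step, so it stays within distance $O(q\ell)$. Let $\nu(x)$ be the number of $j\in[0,q)$ with $f^j(x)\in E$ preceding some canonical phase, and set $k=B_b$ on $\{\nu \text{ odd}\}$. To make this well defined I would define the phase via $e(x)=\#\{0\le j<m(x): f^j(x)\in E\} \bmod 2$, where $m(x)$ is the least $j$ for which $f^j(x)$ is at a locally recognizable base point of the orbit. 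Finite order makes $m$ bounded and locally determined. Then $k$ flips $b$ by $e(x)$, which is locally computable, so $k\in G^u_X$. It is an involution because it preserves $e$, since $e$ does not read $b$. One then checks $kfk=g$: comparing $e(f x)$ with $e(x)$, they differ exactly by whether $x\in E$, which is precisely the extra $B_b$ in $g$.

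\textbf{Main obstacle.} The difficulty is choosing the base point consistently and invariantly under $b$-flips, so that $e(fx)=e(x)+[x\in E]$ holds even at the wrap-around of the period. I would try making the base point the $f$-orbit point minimal in a fixed local order, but wrap-around then requires the total number of $E$-visits per period to be even. For $f$ this holds, since $c$ returns after the period. If the period is odd in the count, I would pass to the period in which $c$ returns, which makes the count even.
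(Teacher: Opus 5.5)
\textbf{Verdict: genuine gap.} Your route differs from the paper's. You trivialise the head-bit cocycle of $g$ over a cross-section of the finite $f$-orbits, which is the idea the remark after the lemma describes. The paper instead writes the coboundary down explicitly. As written, your proposal has a gap exactly at the step you call the main obstacle, and it also has an indexing error.

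\textbf{The base point.} Nothing you say produces a clopen, $b$-independent set $D$ meeting each $f$-orbit exactly once. ``The orbit point minimal in a fixed local order'' is not well defined. Two distinct orbit points $f^ix\neq f^jx$ whose heads are displaced differently can agree on any fixed window around the head (take tapes close to a spatially periodic one), so a bounded inspection cannot exclude ties. Nor is a clopen transversal automatic for a finite-order homeomorphism of Cantor space: an involution with a non-isolated fixed point has none. The missing argument runs as follows.
\begin{itemize}
\item $f^{2p}$ has zero net head displacement everywhere (vary a tape symbol far from the head).
\item Hence a return $f^jx=x$ with displacement $d$ gives $2p\,d=0$, so every return is a genuine local identity and $\mathrm{Fix}(f^j)$ is clopen.
\item So the exact-period sets are clopen and $f$ acts freely on each. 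A greedy refinement of a finite cover by clopen sets with pairwise disjoint translates then yields a clopen fundamental domain.
\item Build it on the layer $b=0$ and copy it to $b=1$ with the global head-bit flip, which commutes with $f$.
\end{itemize}
You also need the same zero-displacement fact for $H^p$ to pass from $H^p=\id$ to $t^p=\id$; you assert that step without it. Otherwise the head could drift relative to the segment boundaries.

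\textbf{The off-by-one error.} With $e(x)=\#\{0\le j<m(x): f^jx\in E\}$ you get $e(x)+e(fx)\equiv 1_E(x)$, so your $k$ satisfies $kfk=f\circ B_b$. But $g=B_b\circ B_c\circ t=B_b\circ f$ flips $b$ when $t(x)\in E$. These two maps differ wherever $1_E(x)\neq 1_E(tx)$. Counting over $1\le j\le m(x)$ fixes this. The wrap-around then works, because the number of $E$-events in an exact $f$-period is even, as you observed.

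\textbf{How the paper avoids this.} The paper sidesteps cross-sections by making your discarded first idea local. Letting $b$ track $c$ is nonlocal only on orbits that never meet $E$, and on those no bit is ever flipped. Consider the clopen set $B=\bigcup_{j=0}^{P-1}t^{-j}E$, which is invariant under $t$, $B_b$ and $B_c$. On $B$ the $F$-block is reached in fewer than $P$ steps, so it lies in a fixed window around the head and $c$ is locally readable. Put $q=c$ on $B$ and $q=0$ off $B$. Then $q(fx)=q(x)+1_E(tx)$ at once, and the map $k$ toggling $b$ by $q$ is visibly a $b$-independent involution in $G^u_X$ with $kfk=g$. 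Your repaired argument would also work, but it needs the zero-displacement and transversal construction above. The paper's formula is one explicit, cross-section-free choice of the same trivialisation.
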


\begin{proof}
If $T$ halts, choose $P>0$ with $H^P=\id$ by Lemma~\ref{lem:KO-two-arrows}. The net head displacement of $H^P$ is zero on every configuration: otherwise, varying a remote tape symbol while keeping the finite computation unchanged would contradict $H^P=\id$ on the full shift. Complete any simulated segment by the virtual $\#$-tails used in its definition. Its $t$-orbit follows the corresponding $H$-orbit, which restores the tape, state, and head position after $P$ steps. Hence $t^P=\id$, uniformly for finite, one-sided infinite, and bi-infinite segments. Let
\[
 B=\bigcup_{j=0}^{P-1}t^{-j}E.
\]
where $E$ is the clopen set from the previous section. This is a clopen $t$-invariant set. It is also invariant under $B_b, B_c$, thus it is invariant under $f$ and $g$. At every point of $B$, the left boundary carrying $c$ is reachable in fewer than $P$ steps. By continuity of one-head machines, that boundary is in a fixed finite window around the head. Consequently its current tape bit $c(x)$ is a locally readable function on $B$. Define
\[
 q(x)=\begin{cases}c(x),&x\in B,\\0,&x\notin B,\end{cases}
\]
and let $k$ toggle $b$ by $q(x)$, fixing all other data and all inactive
points. It is a length-preserving local involution: $q$ is independent
of $b$, and changing the head word does not change $B$ or $c$.

Write $e(x)=1_E(t x)$ for the event performed in one step. On $B$ the
only change to the tape bit is the prescribed flip, so
\[
 q(fx)=q(x)+e(x)\pmod2.
\]
Off $B$ both sides are zero. The head bit after $kfk$ is therefore
$b+q(x)+q(fx)=b+e(x)$, and all remaining coordinates evolve exactly as
under $f$. This is precisely $g$. Finally $f^{2P}=\id$: the underlying
$t$-computation repeats after $P$ steps, and its tape-bit parity cancels
after two repetitions. The conjugacy gives the same conclusion for $g$.
\end{proof}

\begin{remark}
The slick proof above slightly encrypts the idea (from \cite{Salo}). The idea is that to separate the dynamics of $f, g$ in the case where the original Turing machine $T$ does not halt, we add the head bit, which only $g$ will flip at each visit. In the case where $T$ does halt, the conjugacy map can see any tape bit that may have been affected during the entire orbit. We can in this case use a cross-section for the dynamics of $t$-orbit, and pick a specific configuration in the orbit of $x$, which we decide to map trivially. Then the cocyclic correction to the head bit is uniquely determined. Unfortunately, this gives a well-defined map only if we ensure that the $f$- and $g$-orbits of $x$ are of the same length, so we add a tape bit, flipped by both maps, that ensures this. It turns out that we can then give an explicit formula for the cocycle as above, and we see it even happens to be an involution, and in fact independent of the choice of cross-section.
\end{remark}

\subsection{The four germ-aperiodic minimal subsystems}

Now suppose that $T$ does not halt. In an infinite coded tape (without a left or right boundary), neither head nor tape bits are ever flipped under actions of $f, g$. The encoding of $K_a$ with head bit $b$ therefore gives a minimal subsystem $X_{a,b}$ on which $f$ and $g$ agree. The four sets $X_{a,b}$ are pairwise disjoint: $a$ and $b$ are locally visible at the head. Let $\xi_{a,b}\in X_{a,b}$ be the encoding of $x_a$ with head bit $b$.

\begin{lemma}\label{lem:four-minimal}
The four sets $X_{a,b}$, for $(a,b)\in\{0,1\}^2$, are exactly the minimal
subsystems of both $f$ and $g$, when restricted to their respective germ-aperiodic sets.
\end{lemma}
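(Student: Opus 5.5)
We show two things. First, each $X_{a,b}$ is a minimal subsystem of $f$ and of $g$ inside the germ-aperiodic set. Second, every minimal subsystem of $f$ (resp.\ $g$) contained in $\mathcal A(f)$ (resp.\ $\mathcal A(g)$) equals one of them.

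\textbf{Step 1: the $X_{a,b}$ are minimal and germ-aperiodic.} We first note that every point of $X_{a,b}$ is an active configuration whose simulated segment is the whole tape. This holds because $\xi_{a,b}$ encodes $x_a$ on a bi-infinite tape. By Lemma~\ref{lem:KO-two-arrows}, all of $K_a$ consists of bi-infinite $H$-tapes in terra cognita. Any limit of encoded bi-infinite tapes, seen at the head, is again such a tape, since the $W/V$ block structure is locally checkable. So no point of $X_{a,b}$ lies in $E$. On the closed invariant set of such points, $f=g=t$, and $t$ is conjugate to $H$ with the head bit $b$ carried along. Hence $X_{a,b}\cong K_a$ as dynamical systems, and minimality transfers. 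Germ-aperiodicity of points of $K_a$ for $H$ transfers to $t$ as well: a local identity of $t^p$ near an encoded point would decode to a local identity of $H^p$ near the decoded point. The same holds for $f$ and $g$, because on a neighborhood of a point far from any segment boundary they agree with $t$ for the relevant finitely many steps. Disjointness is already noted in the text.

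\textbf{Step 2: every minimal subsystem in the germ-aperiodic set is one of these.} Let $Y\subseteq\mathcal A(f)$ be minimal; $g$ is handled in exactly the same way. We split by the segment type of points $y\in Y$.

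\emph{Inactive points.} These are fixed, so they are not germ-aperiodic and cannot occur in $Y$.

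\emph{Finite segments.} Here the dynamics is periodic. This follows from the remark on finite segments, with the tape bit adding at most a factor $2$, and it is locally periodic on a neighborhood. So such points are not germ-aperiodic either.

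\emph{Infinite segments.} The segment is one-sided or bi-infinite, and the orbit of $y$ decodes to an $H$-orbit on a tape with $\#$-tails. The head and tape bits are passive decorations that never influence the $H$-computation. Germ-aperiodicity of $y$ forces germ-aperiodicity of the decoded $H$-point. The argument is this: if $H^p$ were locally trivial there, then the $t$-orbit would be periodic with bounded spatial extent, and $f^{2p}$ would be locally the identity.

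For infinite segments we apply Lemma~\ref{lem:KO-two-arrows}. The global arrow changes at most once, so the forward tail of the orbit of $y$ has a constant arrow $a$. Along that tail the decoded configurations accumulate on $K_a$. The head bit $b$ is changed only at visits to the left boundary cell adjacent to an $F$-word, and on such a tail it is eventually constant or flips. In either case some point of the form ``encoding of a point of $K_a$ with head bit $b$'' lies in the $\omega$-limit. To see this, we must check that accumulation on $K_a$ in the $H$-sense gives accumulation on $X_{a,b}$ in the encoded sense. This needs the head to be far from the segment boundary at the accumulating times. We get that from the fact that $K_a$ points see large clean zero zones around the head, which cannot be adjacent to a $\#$-boundary (the machine never reads $\#$ inside a valid zone). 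Then, since $Y$ is minimal and closed, $Y$ meets $X_{a,b}$, and minimality of both forces $Y=X_{a,b}$.

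\textbf{Main obstacle.} The delicate point is the last transfer: $H$-accumulation on $K_a$ must translate into accumulation on some $X_{a,b}$ for the encoded dynamics. In particular, the head must eventually be far from the left segment boundary, or, if it visits the boundary infinitely often, we need $\varepsilon$-approximations of $\xi_{a,b}$ at times between visits. I would handle this by choosing approximating times as fresh-call configurations deep inside long zero zones. These exist with arbitrarily large clean neighborhoods by the proof of Lemma~\ref{lem:KO-two-arrows}, so at those times the boundary lies outside any fixed window. Once the boundary is outside the window, the local picture is that of a bi-infinite encoded tape, which gives the required accumulation.
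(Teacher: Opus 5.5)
Your proposal is correct and follows the same route as the paper's proof. You transfer minimality from $K_a$ to $X_{a,b}$ through the block encoding, discard inactive and finite-segment points as locally periodic, and for an infinite segment apply Lemma~\ref{lem:KO-two-arrows} to a constant-arrow tail, then fix the head bit along a subsequence to place some $\xi_{a,b}$ in the orbit closure. Your detour through fresh-call configurations in the last paragraph is unnecessary: closeness to $x_a$ already means the decoded tape agrees with the $\#$-free tape of $x_a$ on a large window, so the segment boundary and the tape bit lie outside it, which is the paper's one-line justification.
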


\begin{proof}
The block encoding identifies the dynamics of either map on $X_{a,b}$
with the dynamics of $H$ on $K_a$, so $X_{a,b}$ is minimal. Points without
an active head block, and points with a finite simulated segment, are
locally periodic (the tape and head bits can at most enlarge the finite
period). They do not belong to the germ-aperiodic set.

For a germ-aperiodic active orbit, consider one of its infinite constant-arrow
tails. Missing tape cells can be treated as hard errors; after its possible
boundary reversal, the tail faithfully follows a partial computation of $M$
in the selected direction. Lemma~\ref{lem:KO-two-arrows} implies that simulated configurations approach $x_a$. Such approximations occur arbitrarily far from the segment boundary, since the approximating windows contain only the canonical tape symbols, not a hard error or a boundary block. The tape bit is therefore outside these windows.

At each approximation the head bit has one of two values. Passing to a subsequence as the window radius tends to infinity fixes a value $b$ and gives $\xi_{a,b}$ in the orbit closure. Any minimal subsystem of the germ-aperiodic set consequently contains some $\xi_{a,b}$, hence its orbit closure $X_{a,b}$. Being minimal, it must be equal to this $X_{a,b}$.
\end{proof}

\section{The dynamical obstruction}
\label{sec:obstruction}

For a homeomorphism $h$ of Cantor space $C$, let $\mathcal{X}$ be its set of distinct minimal subsystems for the restriction of $h$ to its germ-aperiodic set. For $L_1, L_2 \in \mathcal{X}$ write $L_1 \leadsto L_2$ if for every $\epsilon > 0$ there is a point $x \in C$ such that $d(x, L_1) < \epsilon$ and $d(h^n(x), L_2) < \epsilon$ for some \emph{transition time} $n > 0$. (We do not require germ-aperiodicity of $x$.)
Write $\mathcal{G}_h$ for the graph with nodes $\mathcal{X}$ and edges $(L_1, L_2) \in \mathcal{X}^2$ with $L_1 \leadsto L_2$. Of course, the structure of this graph up to isomorphism is an invariant of topological conjugacy.

Define $\mathsf P_3(h)$ as the property that $\mathcal{G}_h$ contains a directed injective path with three nodes. Thus, there are three fixed, distinct minimal subsystems $L_1,L_2,L_3\subset\mathcal A(h)$ such that, for every $\epsilon>0$ and $i = 1, 2$, one can find a point $x \in C$ and $0<n$ such that $d(x, L_i) < \epsilon \wedge d(h^n(x), L_{i+1}) < \epsilon$.

\begin{lemma}
\label{lem:three-minimal}
If $T$ does not halt, then $\mathsf P_3(g)$ holds and $\mathsf P_3(f)$ fails. Consequently $f$ and $g$ are not topologically conjugate.
\end{lemma}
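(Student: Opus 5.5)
By Lemma~\ref{lem:four-minimal}, the graphs $\mathcal G_f$ and $\mathcal G_g$ both have exactly the four nodes $X_{a,b}$. So the task is to determine the edges $X_{a,b}\leadsto X_{a',b'}$ for each map, and then to check for a directed injective path on three nodes. Since $\mathcal G_h$ is a conjugacy invariant, the final sentence of the lemma follows at once.

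\emph{Upper bound on edges.} Suppose $X_{a,b}\leadsto X_{a',b'}$. Then there are points $x$, with transition times $n$, such that $x$ is $\epsilon$-close to $X_{a,b}$ and $h^n x$ is $\epsilon$-close to $X_{a',b'}$. The arrow $a$ and head bit $b$ are locally visible at the head, so for small $\epsilon$ both points are active and carry $(a,b)$, respectively $(a',b')$, at the head.

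Along a $t$-orbit the global arrow changes only at an endpoint of a simulated partial $M$-orbit, that is, by a flip from $0$ to $1$ at a forward end or from $1$ to $0$ at a backward end. The head bit changes only under $g$, and only at visits to the left boundary of a segment. Under $f$ the head bit is preserved, so the only possible edges are $X_{a,b}\leadsto X_{a',b}$, with at most two distinct nodes per value of $b$. Hence $\mathcal G_f$ is a union of two components of size at most $2$, and $\mathsf P_3(f)$ fails.

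\emph{Edges for $g$.} Here I would use the one-visit orbit of Lemma~\ref{lem:one-visit}, encoded with a boundary block $u_c$ immediately left of coordinate $0$ (i.e.\ left of the segment), arrow $a=0$ and head bit $b$. Under $g$ its backward tail accumulates on $X_{0,b}$ and its forward tail on $X_{0,1-b}$, because the single visit flips $b$ exactly once. Truncating this orbit to finite segments long enough to contain the approximation windows gives points witnessing $X_{0,0}\leadsto X_{0,1}$.

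To get a third node, I would also produce a witness of $X_{0,1}\leadsto X_{1,1}$ (or to $X_{1,0}$). Take a fresh-call configuration approximating $\xi_{0,1}$, placed in a long but finite zero zone that is eventually bounded by an incorrect delimiter or hard error far away. Lemma~\ref{lem:KO-two-arrows} lets the orbit approximate $x_0$ for a long time; eventually $M$ hits the error, the global arrow flips to $1$, and the backward traversal with arrow $1$ approximates $x_1$. This has to be arranged without touching the left boundary, so the head bit stays $1$; alternatively one could take a segment with no $F$-block on its left. Together these give the path $X_{0,0}\to X_{0,1}\to X_{1,1}$.

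\emph{Main obstacle.} The main obstacle is the second transition. After the arrow flips, the time-$1$ tail must genuinely approach $K_1$, i.e.\ come close to $x_1$ and not merely be a retrace. Lemma~\ref{lem:KO-two-arrows}(3) guarantees this for infinite germ-aperiodic tails, so I would build an honest orbit with one flip and an infinite arrow-$1$ tail. One way is an infinite tape on the right with an error far to the left of a long run. The reversed $M^{-1}$ traversal then undoes the computation and, by mortality of $C^{-1}$ and recursion, passes into parent zones with growing counters, which yields the recurrence. One must also check that the visit to the error is not to the $F$-block, so that $b$ is unaffected.
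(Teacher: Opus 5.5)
Your argument that $\mathsf P_3(f)$ fails matches the paper: the head bit is $f$-invariant on active orbits, so edges of $\mathcal G_f$ only join $X_{0,b}$ and $X_{1,b}$. Your witness for $X_{0,0}\leadsto X_{0,1}$ from the one-visit orbit also matches. The gap is the second edge $X_{0,1}\leadsto X_{1,1}$. You assert that after the flip ``the backward traversal with arrow $1$ approximates $x_1$''. You then say this must hold not merely for a retrace, and you never prove it.

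The missing idea is the time-reversal symmetry of the completion: $\rho(y,a)=(y,1-a)$ satisfies $\rho H\rho=H^{-1}$, and $\rho K_0=K_1$. Suppose the forward $M$-computation from $w$ reaches a hard error after $K$ steps. Then $H^{2K+1}(w,0)=(w,1)=\rho(w,0)$ exactly. The retrace visits the same cells in reverse order. So if the forward run never brings the head next to the $F$-block, the head bit is unchanged, and $g^{2K+1}$ sends the coded point to its $\rho$-image. Since $\rho$ commutes with the block coding, fixes $b$, and is uniformly continuous, it carries $X_{0,1}$ onto $X_{1,1}$. Hence a point $\epsilon$-close to $X_{0,1}$ before the flip gives a point close to $X_{1,1}$ after the retrace.

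This is exactly how the paper finishes. It inserts a $\#$ in the one-visit configuration $z$, to the right of every cell inspected between the sample times $n_0<0<n_1$. The retrace then passes $\rho(g^{n_1}z')$ at a later time $n_2$, still with head bit $1$. Your first sketch also becomes correct once this argument is supplied: a fresh call near $\xi_{0,1}$ in a long zone closed by a $\#$ on the right, with the $F$-block far to the left. Separate witnesses for the two edges suffice for $\mathsf P_3$, so you do not need the paper's single periodic point.

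The detour you propose instead, via Lemma~\ref{lem:KO-two-arrows}(3), does not work as stated. On any $H$-orbit whose arrow flips from $0$ to $1$, the arrow-$1$ part is, by the definition of $H$, the $M^{-1}$-retrace of the arrow-$0$ part. So there is no ``honest'' non-retracing orbit to look for; an infinite arrow-$1$ tail just mirrors an infinite arrow-$0$ past.

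Your concrete set-up (infinite zero tape on the right, error far to the left) also produces no forward flip. The canonical computation started at a fresh call grows its zone only to the right, never halts, and never runs out of space. So the head never reaches the error and the arrow stays $0$ forever. A flip, if any, could only occur in the past, which would give an edge into $X_{0,b}$ rather than out of it. As written, your proposal therefore establishes only $X_{0,0}\leadsto X_{0,1}$ for $g$, which is not enough for $\mathsf P_3(g)$.
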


\begin{proof}
For $g$, fix $L_1=X_{0,0}$, $L_2=X_{0,1}$, and $L_3=X_{1,1}$. We show the stronger result that one can find a single periodic point close to $L_1$, which after a while gets close to $L_2$, and then eventually gets close to $L_3$ (and eventually, of course, repeats). First, use the $M$-computation of Lemma~\ref{lem:one-visit}, encoded on a right-infinite segment with a tape bit at its left boundary (so the actual left $\#$-tail of $y_0$ corresponds to the imagined left $\#$-tail where the encoding blocks in $I$ end), to get a configuration $z$ corresponding to $y_0$, with global arrow $a = 0$.

During the simulated $M$-orbit on $y_0$, the head visits the cell $0$ exactly once in the moving head dynamics. Set the head bit to be $b=0$ before entering this cell. Upon entering cell $0$, it changes to $b=1$ in the $g$-dynamics. Choose sample times on either side of this event, at which canonical recurrence makes the configurations close to $X_{0,0}$ and $X_{0,1}$, specifically $n_0 < 0$ such that $d(g^{n_0}(z), X_{0,0}) < \epsilon$ and $n_1 > 0$ such that $d(g^{n_1}(z), X_{0,1}) < \epsilon$, respectively. Both sample windows lie far from the segment boundary. We may make them as large as needed for the prescribed $\epsilon$.

The subsequent head positions are unbounded to the right; otherwise only finitely many cells would be inspected and the computation would be periodic in the moving-head model. Choose a cell that will subsequently be inspected, further right than all cells inspected between the first and second sample times and outside both chosen observation windows, and insert a hard error symbol $\#$ there, changing $z$ to $z'$. Note that if we make this change sufficiently far from the origin, we still have $d(g^{n_0}(z'), X_{0,0}) < \epsilon$ and $d(g^{n_1}(z'), X_{0,1}) < \epsilon$.

Now observe that at the first attempt to read the new error symbol of $z'$, $M$ has no forward step, so $H$ changes the global arrow to $a = 1$. From that point the completed computation retraces the same partial $M$-path. Before returning to the left boundary it therefore passes the second sample configuration again at some time $n_2$, but with the global arrow changed to $1$ and with the head bit still $1$. Since $\rho K_0=K_1$, this gives the required approximation to $X_{1,1}$.

As mentioned, such individual points $z'$ (constructed for all $\epsilon \rightarrow 0$) already establish $P_3(g)$, concretely one can use the points $g^{n_0}(z')$ (with transition time $n_1 - n_0$) to establish $X_{0,0} \leadsto X_{0,1}$ and the points $g^{n_1}(z')$ (and transition time $n_2 - n_1$) to establish $X_{0,1} \leadsto X_{1,1}$.

For $f$, the head bit $b$ stays constant along every coded orbit. Thus, the only possible edges in $\mathcal{G}_f$ are between $X_{0, b}$ and $X_{1, b}$, . We conclude that there can be no directed path with three vertices in this graph. Hence $\mathsf P_3(f)$ fails. More precisely, choose $\epsilon_0$ smaller than the distance from each $X_{a,b}$ to the clopen set where the head bit has the other value. An orbit of $f$ cannot approach minimal subsystems with both values of $b$ within $\epsilon_0$. A point close enough to any $X_{a,b}$ has a coded head block, and the simulation never changes a coded head block to an uncoded one.
\end{proof}

\begin{lemma}\label{lem:idle-coordinate}
Let $h$ be a homeomorphism of a compact metrizable space $Z$, and let $Y$
be a nonempty compact metrizable space. Then
\[
             \mathsf P_3(h\times\mathrm{id}_Y)
             \quad\Longleftrightarrow\quad \mathsf P_3(h).
\]
where $\mathsf P_3$ is defined as in the previous section.
\end{lemma}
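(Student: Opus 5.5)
The plan is to describe the graph $\mathcal G_{h\times\id_Y}$ completely in terms of $\mathcal G_h$. We will show that it is the disjoint union, over $y\in Y$, of copies of $\mathcal G_h$, with no edges between different copies. Both directions of the equivalence then follow at once.

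First, $\leadsto$ does not depend on the choice of compatible metric. For compact $L_1,L_2$, the condition "for every $\epsilon$ there is $x$ near $L_1$ with $h^n x$ near $L_2$" can be restated with neighbourhoods, since in a compact metric space the $\epsilon$-neighbourhoods of a compact set are cofinal among all its neighbourhoods. So on $Z\times Y$ we may use the max metric $d((z,y),(z',y'))=\max(d(z,z'),d(y,y'))$.

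\textbf{Step 1: the germ-aperiodic set.} We check that $\mathcal A(h\times\id_Y)=\mathcal A(h)\times Y$. If $h^p$ is the identity on an open $U\ni z$, then $(h\times\id_Y)^p$ is the identity on $U\times Y$. Conversely, if $(h\times\id_Y)^p$ is the identity on an open set containing $(z,y)$, then it is the identity on some basic box $U\times V$ inside it, and projecting gives $h^p|_U=\id$.

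\textbf{Step 2: the minimal subsystems.} The minimal subsystems of $h\times\id_Y$ restricted to $\mathcal A(h)\times Y$ are exactly the sets $L\times\{y\}$ with $L$ minimal in $\mathcal A(h)$ and $y\in Y$. Every fibre $\mathcal A(h)\times\{y\}$ is closed and invariant, so any closed invariant set $K$ satisfies $K\supseteq K\cap(\mathcal A(h)\times\{y\})\neq\emptyset$ for some $y$; if $K$ is minimal it therefore lies in one fibre. Inside that fibre the dynamics is conjugate to $h$ on $\mathcal A(h)$ via projection. In particular, $L\times\{y\}=L'\times\{y'\}$ if and only if $L=L'$ and $y=y'$.

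\textbf{Step 3: the edges.} If $L_1\leadsto L_2$ for $h$, witnessed by $x$ and $n$, then the point $(x,y)$ with the same $n$ witnesses $L_1\times\{y\}\leadsto L_2\times\{y\}$ in the max metric. Conversely, suppose $L_1\times\{y_1\}\leadsto L_2\times\{y_2\}$, witnessed for each $\epsilon$ by $(x,y)$ and $n$. Since the second coordinate is fixed by $h\times\id_Y$, we get $d(y,y_1)<\epsilon$ and $d(y,y_2)<\epsilon$. Letting $\epsilon\to0$ forces $y_1=y_2$. The first coordinates then witness $L_1\leadsto L_2$.

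\textbf{Conclusion and main obstacle.} A directed injective path $L_1\times\{y_1\}\to L_2\times\{y_2\}\to L_3\times\{y_3\}$ in $\mathcal G_{h\times\id_Y}$ must have $y_1=y_2=y_3$ by Step 3. Then the $L_i$ are pairwise distinct by Step 2, and they form a path in $\mathcal G_h$. Conversely, a path in $\mathcal G_h$ lifts to any single fibre. The only delicate point, which I expect to be the main obstacle, is the rigidity in Step 3: edges cannot jump between fibres. This rests on the $Y$-coordinate being literally constant along orbits, and on $\leadsto$ being quantified over all $\epsilon$ with fixed target sets. Checking this carefully, together with the metric-independence remark, is the heart of the argument.
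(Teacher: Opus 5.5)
Your proposal is correct and follows the paper's proof step by step. Both identify $\mathcal A(h\times\mathrm{id}_Y)=\mathcal A(h)\times Y$ via product neighbourhoods, show the minimal subsystems are exactly the sets $L\times\{y\}$, and use constancy of the $Y$-coordinate to forbid edges between fibres and to transfer edges within a fibre. Your remark that $\leadsto$ does not depend on the choice of compatible metric is a small point of rigour that the paper leaves implicit.
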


\begin{proof}
First,
\[
 \mathcal A(h\times\mathrm{id}_Y)=\mathcal A(h)\times Y.
\]
Indeed, a power of the product is the identity on a neighborhood of $(z,y)$
if and only if that power of $h$ is the identity on a neighborhood of $z$;
use a product neighborhood for the reverse implication.

Next, every minimal subsystem of the product has the form $L\times\{y\}$, with $L$ a minimal subsystem of $h$. The $Y$-coordinate is constant on every orbit, so the graph $\mathcal{G}$ has no edges that involve multiple $Y$-fibers. Next if $X_1 \leadsto X_2$ in $\mathcal{G}_h$, then $X_1 \times \{y\} \leadsto X_2 \times \{y\}$ in $\mathcal{G}_{h \times \id}$ for any $y \in Y$, since the $Y$-coordinate is never changed. Conversely, if $X_1 \times \{y\} \leadsto X_2 \times \{y'\}$ in $\mathcal{G}_{h \times \id}$ then of course $y = y'$, and $X_1 \leadsto X_2$ in $\mathcal{G}_h$.

We conclude that the isomorphism classes of connected components in the graphs are the same, in particular $P_3(h)$ holds iff $P_3(h \times \id)$ holds.
\end{proof}

Applying Lemma~\ref{lem:idle-coordinate} to $f$ and $g$ proves that the
nonhalting pair remains nonconjugate after taking products with an identity
map. This covers the idle coordinates in the usual embedding of $2V$ into
$nV$, as well as a nonempty finite idle state set.

\begin{proof}[Proof of Theorem~\ref{thm:MainIntro}]
All local rules, state sets, and coding words above are computable from
$T$. The halting case is Lemma~\ref{lem:halting-conjugacy}. In the
nonhalting case Lemma~\ref{lem:three-minimal} distinguishes $f$ and $g$
by $\mathsf P_3$, which is invariant under topological conjugacy.
Lemma~\ref{lem:idle-coordinate} preserves this distinction after adjoining
any nonempty compact metrizable idle coordinate.
\end{proof}

\begin{remark}
\label{rem:Alternatively}
We can alternatively define $L_1 \leadsto L_2$ by requiring that a point exists containing $L_1$ in its $\alpha$-limit set, and $L_2$ in its $\omega$-limit set. Then $L_1 \leadsto L_2 \leadsto L_3$ can be proved for $g$ by separately showing that we can flipping the time arrow and the head bit, which are easier tasks what we did above. However, we find that the existence of a single periodic point visiting a neighborhood of all three a more interesting obstruction.
\end{remark}

\section{Consequences for groups and computability}\label{sec:consequences}

A \emph{listed group} is a countably infinite group $G$ equipped with a surjection $\phi_G:\N\to G$; repetitions are allowed. Computability of a relation $R\subseteq G^2$ refers to its set of names
\[
 \{(m,n)\in\N^2:(\phi_G(m),\phi_G(n))\in R\}.
\]
An \emph{effective homomorphism} $\tau:H\to G$ is a group homomorphism admitting a total computable function $F:\N\to\N$ with $\tau(\phi_H(n))=\phi_G(F(n))$. It is an \emph{effective embedding} if $\tau$ is injective. We do not require decidable equality or computable group operations for an arbitrary listed group.

The groups $G^u_{X,S}$ and $G_{X,S}$ are listed by their finite local rules. Valid rules and equality can be checked effectively for a mixing SFT; alternatively, invertibility can be certified by a second local rule and checking both compositions. Removing repetitions gives a bijective listing with computable multiplication and inversion.

A finitely generated group is listed by words over a fixed finite generating set and its inverses. The local-rule and word listings of $2V=G_{\{0,1\}^{\Z}}$ are effectively equivalent: words can be evaluated as local rules, and a word for a given local rule can be found by enumeration and decidable equality in $2V$.

\begin{corollary}\label{cor:supergroups}
Let $X$ be an infinite mixing SFT and let $G \leq \Homeo(X)$ be any listed group containing $G_X^u$, such that the inclusion $G_X^u\hookrightarrow G$ is effective. Then each relation $\CP_i(G)$, $1\leq i\leq4$, is
undecidable. Moreover $\CP_1(G)$ and $G^2\setminus\CP_4(G)$ are
recursively inseparable.
\end{corollary}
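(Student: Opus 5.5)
The plan is to reduce everything to Theorem~\ref{thm:Main} via the effective inclusion, and to obtain recursive inseparability of $\CP_1(G)$ from $G^2\setminus\CP_4(G)$ first. Undecidability of each $\CP_i(G)$ then follows at once.

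Given a Turing machine $T$, Theorem~\ref{thm:Main} effectively produces local rules for $f,g\in G_X^u$. Composing with the total computable function $F$ witnessing effectiveness of $G_X^u\hookrightarrow G$ yields names $(m_T,n_T)$ with $\phi_G(m_T)=f$ and $\phi_G(n_T)=g$. If $T$ halts, an involution $k\in G_X^u\subseteq G$ satisfies $kfk=g$, so $(f,g)\in\CP_1(G)$. If $T$ does not halt, part~2 of Theorem~\ref{thm:Main} applies to every nonempty compact metrizable $Y$. Hence $f\times\id_Y$ and $g\times\id_Y$ are not conjugate by any homeomorphism of $X\times Y$, i.e.\ $(f,g)\notin\CP_4(G)$. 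Note that $\CP_4$ only refers to $f,g$ as homeomorphisms of $X$, not to the ambient group, so this is independent of $G$.

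Suppose now that a recursive set $R\subseteq\N^2$ contained the names of $\CP_1(G)$ and was disjoint from the names of $G^2\setminus\CP_4(G)$. Then $T\mapsto[(m_T,n_T)\in R]$ would decide the halting problem on empty input, which is impossible. For each $i$ we have $\CP_1(G)\subseteq\CP_i(G)\subseteq\CP_4(G)$. For $i=1,4$ this is immediate from the definitions. For $i=2,3$ I would use the inclusion lemma above: its hypothesis is satisfied because $G\supseteq G^u_X$ acts locally densely by Lemma~\ref{lem:Inclusions}, and local density passes to supergroups. Therefore each name set of $\CP_i(G)$ would itself be such a separator, so none of them is decidable.

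The only delicate point is ensuring that $\CP_2(G)$ is sandwiched correctly. This is where Rubin's theorem enters, and it requires $X$ to have no isolated points, which holds for an infinite mixing SFT. Everything else is bookkeeping with the listing $\phi_G$, and no decidable equality in $G$ is needed.
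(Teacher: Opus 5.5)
Your proof is correct and follows the paper's own argument. The effective inclusion transports the pair from Theorem~\ref{thm:Main} into $G$, where it lies in $\CP_1(G)$ on halting inputs and outside $\CP_4(G)$ otherwise, and the Rubin-based sandwich $\CP_1\subseteq\CP_2\subseteq\CP_3\subseteq\CP_4$ then turns any decider for some $\CP_i(G)$, or any recursive separator, into a decider for halting. Deriving undecidability from inseparability instead of proving the two in parallel is only a cosmetic reordering.
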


\begin{proof}
Apply Theorem~\ref{thm:MainIntro} and the effective inclusion. A halting input gives a pair in $\CP_1(G)$, whereas a nonhalting input gives a pair outside $\CP_4(G)$. Lemma~\ref{lem:Inclusions} sandwiches each of the four relations between these alternatives. A decision algorithm would therefore decide halting. Likewise any recursive set containing $\CP_1(G)$ and disjoint from $G^2\setminus\CP_4(G)$ would decide halting on the constructed pairs, proving recursive inseparability.
\end{proof}

\begin{corollary}
If $G\leq\Homeo(\{0,1\}^{\Z})$ is finitely generated, listed by words over generators, and contains the standard copy of $2V$, then each relation $\CP_i(G)$, $1\leq i\leq4$, is undecidable. Moreover $\CP_1(G)$ and $G^2\setminus\CP_4(G)$ are recursively inseparable.
\end{corollary}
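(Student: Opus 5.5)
The plan is to reduce this corollary to Corollary~\ref{cor:supergroups}. To do that we need to show that, for $X=\{0,1\}^{\Z}$, the inclusion $G_X^u\hookrightarrow G$ is effective with respect to the word listing of $G$. Here $G_X^u$ is listed by its local rules. Since $G_X^u\le G_X=2V\le G$, the inclusion is the composite of $G_X^u\hookrightarrow 2V$ and $2V\hookrightarrow G$, and it suffices to make each step effective.

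For the first step, a local rule of an element of $G_X^u$ is in particular a local rule of an element of $G_X=2V$. As recalled before Corollary~\ref{cor:supergroups}, the local-rule listing and the word listing of $2V$ (over a fixed finite generating set) are effectively equivalent. Concretely, given a rule for $h$, we enumerate words over the generators of $2V$, evaluate each word as a local rule, and test equality with the rule for $h$; equality of local maps on the full shift is decidable. This search terminates because $h\in 2V$. So there is a total computable map sending a local rule name to a word in the generators of $2V$.

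For the second step, the standard copy of $2V$ lies in the finitely generated group $G$. Each of the finitely many generators of $2V$ is some fixed element of $G$, so we can fix, once and for all, a word $\omega_i$ over the generators of $G$ representing the $i$-th generator of $2V$. Substituting the $\omega_i$ (and their formal inverses) letter by letter into a word over $2V$'s generators is computable, and it yields a $G$-word naming the same homeomorphism. Composing the two steps gives a total computable $F$ witnessing that $G_X^u\hookrightarrow G$ is an effective embedding. Corollary~\ref{cor:supergroups} then yields undecidability of each $\CP_i(G)$ and the recursive inseparability of $\CP_1(G)$ and $G^2\setminus\CP_4(G)$.

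The only delicate point is the non-uniformity in the second step. The words $\omega_i$ need not be computable from any description of $G$, but they are finitely many fixed data, so their existence suffices for the existence of a computable $F$. We never need decidable equality in $G$ itself, since all the searching takes place inside $2V$, where the word problem is decidable.
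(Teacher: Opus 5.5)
Your proposal is correct and follows the same route as the paper. You factor the inclusion as $G^u_{\{0,1\}^{\Z}}\hookrightarrow 2V\hookrightarrow G$, make the first step effective via the equivalence of the local-rule and word listings of $2V$, make the second effective by substituting fixed $G$-words for the generators of $2V$, and then invoke Corollary~\ref{cor:supergroups}; you just spell out in more detail, including the harmless non-uniformity of the words $\omega_i$, what the paper states in one line.
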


\begin{proof}
The group $2V$ has $G^u_{\{0,1\}^\Z}$ as a subgroup with an effective inclusion, and generators of $2V$ can be presented by finite words in the generators of $G$, so we obtain an effective embedding of $G^u_{\{0,1\}^\Z}$ in $G$, and the previous corollary applies.
\end{proof}

\begin{proposition}\label{prop:standard-actions}
In their standard actions, $nV$ for $n\geq2$ and $\RTM(k,s)$ for
$k\geq2$, $s\geq1$, have undecidable relations $\CP_i$, $1\leq i\leq4$.
The same holds for listed supergroups of homeomorphisms of the same spaces, containing the indicated standard actions by effective inclusions. In each case $\CP_1$ is recursively inseparable from the complement of $\CP_4$.
\end{proposition}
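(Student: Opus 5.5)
The plan is to deduce everything from Corollary~\ref{cor:supergroups}. That corollary needs a listed group containing $G_X^u$ with an effective inclusion, where $X$ is an infinite mixing SFT. So for each of the two families of standard actions I will exhibit such an $X$ and an effective embedding, and then handle the supergroup clause by composing effective inclusions.

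For $nV$ with $n\geq 2$, take $X=\{0,1\}^{\Z}$. As recalled in Section~\ref{sec:prelim}, reading the two half-tapes outward from the origin identifies $X$ with $(\{0,1\}^\N)^2$ and identifies $G_X$ with $2V$. Since $G_X^u\leq G_X$, this gives an effective inclusion $G_X^u\hookrightarrow 2V$: a local rule is converted into a prefix table, and the conversion is effective. Next, $2V\leq nV$ acts on the first two coordinates and fixes the remaining $n-2$, which is again effective on prefix tables. Under the identification, an element $f\in G_X^u$ becomes $f\times\id_Y$ with $Y=(\{0,1\}^\N)^{n-2}$. This is exactly why $\CP_4$ was built into Theorem~\ref{thm:Main}: nonhalting gives non-conjugacy of $f\times\id_Y$ and $g\times\id_Y$ for every $Y$.

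Strictly speaking, Corollary~\ref{cor:supergroups} is stated for groups acting on $X$ itself. So I will rerun its proof rather than cite it verbatim:
\begin{itemize}
\item a halting input gives a pair in $\CP_1(nV)$, because the involution $k$ also embeds into $nV$;
\item a nonhalting input gives a pair outside $\CP_4(nV)$. If $h$ conjugated $(f\times\id_Y)\times\id_{Y'}$ to $(g\times\id_Y)\times\id_{Y'}$, then taking $Y\times Y'$ as the idle space contradicts Theorem~\ref{thm:Main}(2).
\end{itemize}
The inclusion lemma $\CP_1\subseteq\CP_2\subseteq\CP_3\subseteq\CP_4$, which holds for groups containing $nV$ by the paper's inclusions lemma (the second, unlabeled lemma after Lemma~\ref{lem:Inclusions}), then sandwiches all four relations. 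This yields undecidability and recursive inseparability of $\CP_1$ from the complement of $\CP_4$.

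For $\RTM(k,s)$ in the moving-tape model, take $X=\{0,\dots,k-1\}^{\Z}$, which is an infinite mixing SFT since $k\geq 2$. With $s=1$, $\RTM(k,1)$ is $G_X^u$ itself. For $s\geq 2$, I will embed $G_X^u$ into $G^u_{X,S}$ with $|S|=s$ by letting $f$ act as $f\times\id_S$; this embedding is effective on local rules. The finite idle state set is again covered by Theorem~\ref{thm:Main}(2), taking $Y=S$. The sandwiching lemma applies to groups containing $G^u_{X,S}$, so the same argument goes through.

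Finally, the supergroup clause follows by composing effective inclusions: given a listed supergroup $G$ of homeomorphisms of the same space containing the standard action effectively, the composite $G_X^u\hookrightarrow G$ is effective. Membership in $\CP_1(G)$ is still witnessed by the same involution. Non-membership in $\CP_4(G)$ is a purely topological statement and does not depend on $G$, and local density passes to supergroups, so the sandwich still holds. The only real obstacle I expect is bookkeeping: checking that the identifications of spaces are compatible, so that the embedded pair is literally of the form $f\times\id_Y$ for a product decomposition of the ambient Cantor space, and that the embeddings are effective. Everything else is inherited from Theorem~\ref{thm:Main}.
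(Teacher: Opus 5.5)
Your proposal is correct and follows essentially the same route as the paper. You apply Theorem~\ref{thm:Main} on the binary (resp.\ $k$-symbol) full shift, take the extra $n-2$ Cantor coordinates (resp.\ the $s$ idle states, together with any further idle space) as the space $Y$, extend the halting involution by the identity, and sandwich all four relations using Rubin's theorem, since local density passes to supergroups. The only difference is cosmetic: the paper re-justifies local density of the standard $nV$ and $\RTM(k,s)$ actions inside this proof, whereas you cite the earlier inclusions lemma, which asserts it.
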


\begin{proof}
For $2V$, apply the theorem to the binary full shift and use $G_X^u\leq G_X=2V$. For $n>2$, the standard inclusion acts trivially on the remaining Cantor coordinates. The halting conjugator extends by the identity. The nonhalting conclusion persists by taking those coordinates, together with any further idle space, as the space $Y$ in Theorem~\ref{thm:MainIntro}.

For $\RTM(k,s)$, apply the theorem on the $k$-symbol full shift and leave the $s$ external states idle. This gives precisely a product with a finite identity map, which is also covered by the theorem.

It remains to justify automorphism-conjugacy in the larger spaces. The standard $nV$ action is locally dense: inside a clopen rectangle one can exchange smaller prefix rectangles and move a point into any prescribed subrectangle. For $\RTM(k,s)$, finite local word swaps supported in a cylinder and a fixed state make the orbit dense in that state cylinder, just as in Lemma~\ref{lem:Inclusions}. These actions remain locally dense after passage to supergroups. Rubin's theorem therefore gives $\CP_2\subseteq\CP_3$ in every asserted case. The remaining conclusions follow from the same halting reduction.
\end{proof}

\begin{lemma}\label{lem:aut-re}
If a group $G$ is finitely presented and has decidable word problem, then both ordinary conjugacy and automorphism-conjugacy are recursively enumerable relations on words.
\end{lemma}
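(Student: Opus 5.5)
The approach is to show that each relation is the projection of a decidable relation, i.e.\ that a witness can be searched for. Fix a finite presentation $\langle Q\mid R\rangle$ of $G$ and let $Q^\pm$ be the generators together with their inverses. Pairs of words $(u,v)$ are enumerated, and we semi-decide membership by running over all candidate witnesses in parallel (dovetailing).

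\emph{Ordinary conjugacy.} Enumerate all words $w$ over $Q^\pm$. For each $w$, run the word problem algorithm on $wuw^{-1}v^{-1}$. Since the word problem is decidable, each test terminates. The pair $(u,v)$ is conjugate if and only if some $w$ passes, so this search halts exactly on conjugate pairs, and the relation is recursively enumerable. Finite presentability is not needed here; decidability of the word problem suffices.

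\emph{Automorphism-conjugacy.} The key point is that automorphisms of a finitely presented group have finite descriptions that can be effectively recognised, at least semi-decidably. An endomorphism is determined by an assignment $q\mapsto w_q$ of words to generators. Such an assignment extends to a homomorphism if and only if every relator $r\in R$ maps to a word equal to $1$. Because $R$ is finite, this is a finite list of word-problem checks and is therefore decidable.

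It remains to certify bijectivity. Enumerate pairs of assignments $(\alpha,\beta)$, both passing the relator check. For each pair, test whether $\beta(\alpha(q))=q$ and $\alpha(\beta(q))=q$ in $G$ for every $q\in Q$. This is again a finite set of word-problem checks. The pair passes if and only if $\alpha$ is an automorphism with inverse $\beta$. Every automorphism has such an inverse, so every automorphism eventually appears in the enumeration.

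Finally, dovetail over all certified pairs $(\alpha,\beta)$ and test whether $\alpha(u)=v$ in $G$, again by the word problem. The search halts if and only if some automorphism maps $u$ to $v$. Hence automorphism-conjugacy is recursively enumerable.

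\emph{Main obstacle.} The only delicate step is recognising automorphisms. The homomorphism test needs finitely many relators, which is where finite presentation is used; with infinitely many relators, the test could only refute candidates, never confirm them. Bijectivity is not decidable directly, but exhibiting an explicit inverse turns it into a finite positive certificate. With these two observations, the rest of the argument is routine dovetailing.
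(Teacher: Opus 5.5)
Your proposal is correct and follows essentially the same route as the paper. For ordinary conjugacy you search for a conjugating word using the word problem. For automorphism-conjugacy you enumerate pairs of generator assignments, certify them as mutually inverse automorphisms by finitely many word-problem checks (the relators and $\beta\alpha(q)=q=\alpha\beta(q)$), and then test $\alpha(u)=v$.
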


\begin{proof}
For ordinary conjugacy enumerate candidate conjugating words and test the required equality. For automorphism-conjugacy, fix a finite presentation $\langle x_1,\ldots,x_d\mid R\rangle$. Enumerate pairs of $d$-tuples of words $(u_1,\ldots,u_d)$ and $(v_1,\ldots,v_d)$. Check that each tuple satisfies every relator in the finite set $R$, so that the substitutions define endomorphisms $\alpha$ and $\beta$. Then check $\alpha\beta(x_i)=x_i=\beta\alpha(x_i)$ for all generators. These finite checks are decidable by the word problem, and exactly the pairs of mutually inverse automorphisms pass. Every automorphism occurs with its inverse. Testing $\alpha(f)=g$ in this enumeration semidecides automorphism-conjugacy.
\end{proof}

\begin{theorem}\label{thm:sigma1}
For every $n\geq2$, $\CP_1(nV)$ and $\CP_2(nV)$ are $\Sigma^0_1$-complete under computable many-one reductions.
\end{theorem}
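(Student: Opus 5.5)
Two things need to be shown: membership in $\Sigma^0_1$ and $\Sigma^0_1$-hardness.

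\emph{Membership.} I will invoke Lemma~\ref{lem:aut-re}. By Brown, and by Hennig--Matucci for general $n$, the groups $nV$ are finitely presented. They have decidable word problem: words are evaluated to prefix tables and compared, as noted in Section~\ref{sec:prelim}. Lemma~\ref{lem:aut-re} then makes $\CP_1(nV)$ and $\CP_2(nV)$ recursively enumerable relations on words. The word listing and the prefix-table listing are effectively equivalent. This was noted for $2V$, and the same argument works for $nV$: evaluate words as tables, and find a word for a given table by enumeration together with decidable equality. Hence membership in $\Sigma^0_1$ does not depend on which listing is used.

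\emph{Hardness.} I will reduce the halting problem on empty input, a $\Sigma^0_1$-complete set, to both relations at once. Given $T$, Theorem~\ref{thm:MainIntro} for the binary full shift produces $f,g\in G^u_{\{0,1\}^\Z}\le 2V$. Embed them in $nV$ by the standard inclusion, letting them act trivially on the remaining $n-2$ coordinates, and translate them effectively into words. This gives a computable map $T\mapsto(f,g)$.

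If $T$ halts, the involution $k$ extended by the identity conjugates $f$ to $g$ in $nV$. Hence $(f,g)\in\CP_1(nV)\subseteq\CP_2(nV)$.

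If $T$ does not halt, take $Y=(\{0,1\}^\N)^{n-2}$; when $n=2$ take $Y$ a point. Theorem~\ref{thm:MainIntro} shows that $f\times\id_Y$ and $g\times\id_Y$ are not topologically conjugate. So $(f,g)\notin\CP_3(nV)$. The lemma giving $\CP_1\subseteq\CP_2\subseteq\CP_3$ for supergroups of $nV$ rests on local density and Rubin's theorem. It yields $(f,g)\notin\CP_2(nV)$, and therefore $(f,g)\notin\CP_1(nV)$.

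Thus $T\mapsto(f,g)$ is a many-one reduction from halting to each of $\CP_1(nV)$ and $\CP_2(nV)$. Formally, a relation on names reduces to a set of pairs of naturals via pairing; that is routine.

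\emph{Expected obstacle.} The step requiring care is the membership half, namely the finite presentability of $nV$ for all $n\ge2$, which I will cite rather than reprove. A secondary point is checking that Lemma~\ref{lem:aut-re}'s enumeration of automorphisms is correct. It does not need to know $\Aut(nV)$ explicitly, since it verifies mutual inverse endomorphisms using finitely many relators, so it applies directly.
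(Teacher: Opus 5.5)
Your proposal is correct and matches the paper's proof. The upper bound comes from finite presentability plus decidable word problem, fed into Lemma~\ref{lem:aut-re}. Hardness comes from the halting reduction of Theorem~\ref{thm:MainIntro}, pushed into $nV$ by the standard embedding as in Proposition~\ref{prop:standard-actions}, with the chain $\CP_1\subseteq\CP_2\subseteq\CP_3$ from Rubin's theorem handling both relations at once. One small slip: the finite presentation results the paper cites are due to Brin \cite{BrinPres} and Hennig--Matucci \cite{HennigMatucci}, not Brown.
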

\begin{proof}
The groups $nV$ are finitely presented \cite{BrinPres,HennigMatucci}, and their finite prefix-table descriptions give decidable word problem. Lemma~\ref{lem:aut-re} supplies the $\Sigma^0_1$ upper bounds. The effective halting reduction in Proposition~\ref{prop:standard-actions} supplies hardness for both relations. Finite local or prefix descriptions can be converted effectively to words in a fixed generating set by enumerating words and comparing the resulting tables.
\end{proof}

\begin{remark}
\label{rem:NotRequire}
The preceding argument does not require any effective description of the full automorphism group of $nV$. However, a transducer description of these automorphism groups is available for every $nV$ \cite{Elliott}.
\end{remark}

\begin{question}
What is the exact computability-theoretic complexity of $\CP_3(nV)$ for $n\geq2$?
\end{question}

The conjugating homeomorphism of a Cantor space can be coded by compatible finite clopen partitions, giving the usual analytic upper bound $\Sigma^1_1$. The argument here proves undecidability but does not give a sharper upper bound.

For the moving-head model, we have not investigated the difficulty of the topological conjugacy problem. Of course, we suspect it is at least undecidable.

\begin{question}
How difficult is deciding topological conjugacy of elements of $G^u_{X, S}$ in the moving-head model?
\end{question}

\section*{AI acknowledgements}

This paper was written with assistance from ChatGPT Astra 6 and Claude Sonnet 5.5. Specifically, the following mathematical observations are due to Astra: the observation that the second time arrow in the Kari-Ollinger machine prevents essential minimality, Remark~\ref{rem:NotRequire}, the cross-section free proof of Lemma~\ref{lem:halting-conjugacy} (which was only implicit in \cite{Salo}) and the proof of Lemma~\ref{lem:aut-re} based on general properties (instead of \cite{Elliott}). Remark~\ref{rem:Alternatively} is essentially due to Claude.


\begin{thebibliography}{99}
\bibitem{BaKaSa16}
Sebasti\'an Barbieri, Jarkko Kari, and Ville Salo.
\emph{The group of reversible Turing machines}.
In \emph{Cellular Automata and Discrete Complex Systems}, LNCS 9664,
pages 49--62, 2016.
\url{https://doi.org/10.1007/978-3-319-39300-1_5}.

\bibitem{BaKaSa23}
Sebasti\'an Barbieri, Jarkko Kari, and Ville Salo.
\emph{The group of reversible Turing machines: subgroups, generators, and
computability}.
Forum of Mathematics, Sigma 13:e176, 2025.
\url{https://doi.org/10.1017/fms.2025.10118}.

\bibitem{Brin}
Matthew G. Brin.
\emph{Higher dimensional Thompson groups}.
Geometriae Dedicata 108:163--192, 2004.
\url{https://doi.org/10.1007/s10711-004-8122-9}.

\bibitem{BrinPres}
Matthew G. Brin.
\emph{Presentations of higher dimensional Thompson groups}.
Journal of Algebra 284(2):520--558, 2005.
\url{https://doi.org/10.1016/j.jalgebra.2004.10.028}.

\bibitem{HennigMatucci}
Johanna Hennig and Francesco Matucci.
\emph{Presentations for the higher-dimensional Thompson groups $nV$}.
Pacific Journal of Mathematics 257(1):53--74, 2012.
\url{https://doi.org/10.2140/pjm.2012.257.53}.

\bibitem{SMART}
Julien Cassaigne, Nicolas Ollinger, and Rodrigo Torres-Avil\'es.
\emph{A small minimal aperiodic reversible Turing machine}.
Journal of Computer and System Sciences 84:288--301, 2017.
\url{https://doi.org/10.1016/j.jcss.2016.10.004}.

\bibitem{Elliott}
Luke Elliott.
\emph{A description of $\operatorname{Aut}(dV_n)$ and $\operatorname{Out}(dV_n)$ using transducers}.
Groups, Geometry, and Dynamics 17(1):293--312, 2023.
\url{https://doi.org/10.4171/GGD/697}.

\bibitem{GajardoOllingerTorres}
Anah\'i Gajardo, Nicolas Ollinger, and Rodrigo Torres-Avil\'es.
\emph{The transitivity problem of Turing machines}.
In \emph{Mathematical Foundations of Computer Science 2015}, LNCS 9234,
pages 231--242, 2015.
\url{https://doi.org/10.1007/978-3-662-48057-1_18}.

\bibitem{KurkaTuringMachineDynamics}
Petr K\r{u}rka.
\emph{On topological dynamics of Turing machines}.
Theoretical Computer Science 174(1--2):203--216, 1997.
\url{https://doi.org/10.1016/S0304-3975(96)00025-4}.

\bibitem{Morita}
Kenichi Morita.
\emph{Universality of a reversible two-counter machine}.
Theoretical Computer Science 168(2):303--320, 1996.
\url{https://doi.org/10.1016/S0304-3975(96)00081-3}.

\bibitem{TorresMixing}
Rodrigo Torres-Avil\'es.
\emph{Topological mixing notions on Turing machine dynamical systems}.
Information and Computation 285, Part B:104915, 2022.
\url{https://doi.org/10.1016/j.ic.2022.104915}.

\bibitem{KaOl08}
Jarkko Kari and Nicolas Ollinger.
\emph{Periodicity and immortality in reversible computing}.
In \emph{Mathematical Foundations of Computer Science 2008}, LNCS 5162,
pages 419--430, 2008.
\url{https://doi.org/10.1007/978-3-540-85238-4_34}.
The extended version, including Appendix A, is
\url{https://hal.science/hal-00270815}.

\bibitem{Rubin}
Matatyahu Rubin.
\emph{On the reconstruction of topological spaces from their groups of
homeomorphisms}. Transactions of the American Mathematical Society
312(2):487--538, 1989.
\url{https://doi.org/10.1090/S0002-9947-1989-0988881-4}.

\bibitem{Salo}
Ville Salo.
\emph{Conjugacy of reversible cellular automata and one-head machines}.
\url{https://arxiv.org/abs/2011.07827}.
\end{thebibliography}
\end{document}